\author{Tom De Medts\footnote{tom.demedts@ugent.be} \\ Michiel Van Couwenberghe\footnote{michiel.vancouwenberghe@ugent.be [PhD Fellow of the Research Foundation - Flanders (Belgium) (F.W.O.-Vlaanderen)]}}
\title{Modules over axial algebras}
\setlist{itemsep=.2ex}
\theoremstyle{plain}
\newtheorem{thm}{Theorem}[section]
\newtheorem{prop}[thm]{Proposition}
\newtheorem{lem}[thm]{Lemma}
\newtheorem{cor}[thm]{Corollary}
\theoremstyle{definition}
\newtheorem{defn}[thm]{Definition}
\newtheorem{example}[thm]{Example}
\newtheorem{rem}[thm]{Remark}
\DeclareMathOperator\aut{Aut}
\DeclareMathOperator\GL{GL}
\DeclareMathOperator\U{U}
\DeclareMathOperator\Char{char}
\DeclareMathOperator\order{o}
\DeclareMathOperator\Z{Z}
\DeclareMathOperator\Miy{Miy}
\newcommand{\mP}{\mathcal{P}}
\newcommand{\mL}{\mathcal{L}}
\newcommand{\mG}{\mathcal{G}}
\newcommand{\mA}{\mathcal{A}}
\newcommand{\dash}{\nobreakdash-\hspace{0pt}}
\begin{document}

\maketitle

\begin{abstract}
	We introduce axial representations and modules over axial algebras as new tools to study axial algebras. All known interesting examples of axial algebras fall into this setting, in particular the Griess algebra whose automorphism group is the Monster group. Our results become especially interesting for Matsuo algebras. We vitalize the connection between Matsuo algebras and 3\dash transposition groups by relating modules over Matsuo algebras with representations of 3\dash transposition groups. As a by-product, we define, given a Fischer space, a group that can fulfill the role of a universal 3\dash transposition group.
\end{abstract}

\begin{small}
\paragraph*{Keywords.} axial algebras, modules, axial representations, Matsuo algebras, 3\dash transposition groups, Fischer spaces.
\paragraph*{MSC2010.} primary: 17A99, 20B25, 20F29, 20C05; secondary: 17B69, 20C34.
\end{small}

\section{Introduction}

In 1982, Robert Griess constructed a commutative non-associative algebra, called the Griess algebra, in order to prove the existence of the Monster group \cite{Gri82}. Igor Frenkel, James Lepowsky and Arne Meurman observed that (a deformation of) this Griess algebra can be retrieved as a weight\dash 2 component of a \emph{vertex operator algebra (VOA)} $V^\natural$ \cite{FLM88}.
Richards Borcherds then obtained a proof of the moonshine conjectures by studying this VOA \cite{Bor86}.

The weight-2 components of VOAs similar to $V^\natural$ give rise to algebras with properties like the ones for the Griess algebra. Alexander Ivanov defined \emph{Majorana algebras} in an attempt to axiomatize such algebras. The definition of an \emph{axial algebra} was introduced by Jonathan Hall, Sergey Shpectorov and Felix Rehren in 2015 by stripping away some of the requirements in the definition of Majorana algebras in such a way that many results concerning Majorana algebras still remain true.

Since axial algebras have only recently been defined, this new research subject is very much unexplored
and it is not even clear whether the definition of an axial algebra is yet in its final stage.

\medskip

In this paper, we provide some new tools to study axial algebras.
First, we define \emph{axial representations} of a group as a generalization of Majorana representations which were introduced in \cite{IPSS10}. This notion is perhaps more foundational than the definition of an axial algebra itself, since it describes the connection between axial algebras and groups,
very much like the connection between the Griess algebra and the Monster group.

Second, we present a natural definition of \emph{modules over axial algebras}. If an axial algebra $A$ is an axial representation of a group $G$, each $A$\dash module gives rise to a $\U(A)$\dash module where $\U(A)$ is a certain central extension of $G$; see Theorem \ref{thm:moduletorep} below.
In this sense, our definition of modules over axial algebras reinforces the connection between axial algebras and groups.

Third, we investigate the theory of modules more thoroughly for \emph{Matsuo algebras} over \emph{Fischer spaces}, an important class of examples of axial algebras. In this case, the group $\U(A)$ will be a \emph{universal 3\dash transposition group} related to the given Fischer space, as we explain in Theorem~\ref{thm:universal3trans}.
(The well known connection between $3$\dash transposition groups and Fischer spaces is due to Francis Buekenhout \cite{Bue74}.)
This enables us to construct a module for the Matsuo algebra out of every $\U(A)$\dash module; see Theorem \ref{thm:reptomodule}.
This correspondence between modules over Matsuo algebras and $\U(A)$\dash modules is not one-to-one;
it turns out that $1$-eigenvectors in the modules over the Matsuo algebra play a special role (see Corollary~\ref{cor:reptomodule}) and almost always indicate the presence of a
\emph{regular module} as submodule, and this is the content of Theorems~\ref{thm:1eigen1} and~\ref{thm:1eigen2}.

\paragraph*{Acknowledgment.}

We have benefited from fruitful discussions between Jonathan Hall and the second author during a visit at Michigan State University.
We thank the anonymous referee of an earlier version of this paper for his suggestions, which improved the exposition of the results.

\section{Axial algebras} \label{sec:axialalgebras}

This section provides an introduction into the realm of axial algebras. We use the definition by J. Hall, F. Rehren and S. Shpectorov in \cite{HRS15}. This definition of an axial algebra resembles a property satisfied by idempotents in associative and Jordan algebras, namely the existence of a Peirce decomposition. The right multiplication operator of an idempotent in a Jordan algebra is diagonalizable and the decomposition into eigenvectors is compatible with the multiplication, in the sense that the multiplication of eigenvectors is described by a fusion rule.

\begin{defn}
	Let $R$ be a commutative ring with identity. A \emph{fusion rule} is a pair $(\Phi,\star)$ such that $\Phi \subseteq R$ and $\star : \Phi \times \Phi \to 2^\Phi$ is a symmetric map.
\end{defn}

\begin{defn}
	Let $R$ be a commutative ring with identity, $A$ a commutative (not necessarily associative) $R$\dash algebra and $e \in A$ an idempotent. For each $\phi \in R$ we denote the eigenspace of $e$ with eigenvalue $\phi$ by
	\[
		A^e_\phi = \{a \in A \mid ae = \phi a \}.
	\]
	For each subset $\Lambda \subseteq R$, let
	\[
		A^e_\Lambda = \bigoplus_{\phi \in \Lambda} A^e_\phi,
	\]
	with the convention that $A^e_\emptyset = \{0\}$.
	We call $e$ a \emph{$(\Phi,\star)$\dash diagonalizable idempotent} (or an \emph{axis}) for the fusion rule $(\Phi,\star)$ if
	\[
		A = \bigoplus_{\phi \in \Phi} A^e_\phi
	\]
	and
	\[
		A_\phi^e \cdot A_\psi^e \subseteq A_{\phi \star \psi}^e
	\]
	for all $\phi,\psi \in \Phi$. This means that the product of a $\phi$-eigenvector and a $\psi$-eigenvector is a sum of $\chi$-eigenvectors where $\chi$ runs through $\phi \star \psi$.
\end{defn}

\begin{defn}
	A \emph{$(\Phi,\star)$\dash axial algebra} is a pair $(A,\Omega)$ where:
	\begin{enumerate}[(i)]
		\item $A$ is a commutative (not necessarily associative) $R$\dash algebra and,
		\item $\Omega \subset A$ is a generating set of $(\Phi,\star)$\dash axes for $A$.
	\end{enumerate}
	We will often omit the set $\Omega$ in our notation.
\end{defn}

\begin{example}
	Idempotents of Jordan algebras are $\Phi(\alpha)$\dash diagonalizable idempotents for $\alpha = \frac{1}{2}$ where $\Phi(\alpha)$ denotes the \emph{Jordan fusion rule} described by Table \ref{tab:jordanfr} \cite[p.\ 119]{Jac68}.
	\begin{table}
		\renewcommand{\arraystretch}{1.5}
		\[\begin{array}{c|ccc}
			\star & 1 & 0 & \alpha \\ \hline
			1 & \{1\} & \emptyset & \{\alpha\} \\
			0 & \emptyset & \{0\} & \{\alpha\} \\
			\alpha & \{\alpha\} & \{\alpha\} & \{1,0\}
		\end{array}\]
		\caption{the Jordan fusion rule $\Phi(\alpha)$}
		\label{tab:jordanfr}
	\end{table}
\end{example}

The important connection between some axial algebras and groups arises from the special case of a $\mathbb{Z}/2\mathbb{Z}$\dash graded fusion rule.

\begin{defn}
	\begin{enumerate}[(i)]
        \item
            A fusion rule $(\Phi,\star)$ is called \emph{$\mathbb{Z}/2\mathbb{Z}$\dash graded} if $\Phi$ can be partitioned into two subsets $\Phi_+$ and $\Phi_-$ such that
            \begin{align*}
                \phi \star \psi \subseteq \Phi_+ & \text{ whenever } \phi,\psi \in \Phi_+, \\
                \phi \star \psi \subseteq \Phi_+ & \text{ whenever } \phi,\psi \in \Phi_-, \\
                \phi \star \psi \subseteq \Phi_- & \text{ whenever } \phi \in \Phi_+ \text{ and } \psi \in \Phi_-.
            \end{align*}
        \item
            Let $(A,\Omega)$ be a $(\Phi,\star)$-axial algebra for some $\mathbb{Z}/2\mathbb{Z}$-graded fusion rule $(\Phi,\star)$.
            We associate to each $(\Phi,\star)$-axis $e$ of $A$ a \emph{Miyamoto involution} $\tau_e \in \aut(A)$ defined by linearly extending
            \[
            a^{\tau_e} = \begin{cases}
                \phantom{-}a & \text{if $a \in A_{\Phi_+}^e$}, \\
                -a & \text{if $a \in A_{\Phi_-}^e$}.
            \end{cases}
            \]
            Because of the $\mathbb{Z}/2\mathbb{Z}$\dash grading of the fusion rule, these maps define automorphisms of $A$.
            Note that, at this point, we allow $A_{\Phi_-}^e$ to be trivial and hence $\tau_e$ to be trivial.
            However, when $A_{\Phi_-}^e \neq 0$, these automorphisms are indeed involutions.
        \item
            We call the subgroup $\langle \tau_e \mid e \in \Omega \rangle \leq \aut(A)$ the \emph{Miyamoto group} of the axial algebra $(A,\Omega)$,
            and we denote it by $\Miy(A, \Omega)$.
        \item
            We say that $\Omega \subset A$ is \emph{Miyamoto-closed} when it is invariant under $\Miy(A, \Omega)$.
	\end{enumerate}
\end{defn}

\begin{example} \label{ex:jordanengriess}
	\begin{enumerate}[(i)]
		\item The Jordan fusion rule $\Phi(\alpha)$ from Table \ref{tab:jordanfr} is $\mathbb{Z}/2\mathbb{Z}$\dash graded with $\Phi_+=\{1,0\}$ and $\Phi_-=\{\alpha\}$.
		\item \label{ex:jordanengriess:griess} The \emph{Griess algebra} is a 196884\dash dimensional real axial algebra that satisfies the fusion rule from Table \ref{tab:griessfr} \cite[Lemma 8.5.1, p.\ 209]{Iva09}. This fusion rule is $\mathbb{Z}/2\mathbb{Z}$\dash graded with $\Phi_+ = \{1,0,\frac{1}{4}\}$ and $\Phi_-=\{\frac{1}{32}\}$. The Miyamoto involutions of the generating set of axes of the Griess algebra are called \emph{Majorana involutions}. They generate the full automorphism group of the Griess algebra which is better known as the Monster group; see \cite[{Proposition 8.6.2, p.\ 210}]{Iva09} and \cite[p.\ 497]{Tit84}.
		\begin{table}
		\renewcommand{\arraystretch}{1.5}
		\[\begin{array}{c|cccc}
			\star & 1 & 0 & \frac{1}{4} & \frac{1}{32} \\ \hline
			1 & 1 & \emptyset & \frac{1}{4} & \frac{1}{32} \\
			0 & \emptyset & 0 & \frac{1}{4} & \frac{1}{32} \\
			\frac{1}{4} 	& \frac{1}{4} & \frac{1}{4} & 1,0 & \frac{1}{32} \\
			\frac{1}{32} & \frac{1}{32} & \frac{1}{32} & \frac{1}{32} & 1,0,\frac{1}{4}
		\end{array}\]
		\caption{fusion rule of the Griess algebra}
		\label{tab:griessfr}
	\end{table}
	\end{enumerate}
\end{example}

\begin{defn}
	Let $k$ be a field. A \emph{Frobenius axial algebra} is an axial $k$\dash algebra equipped with a bilinear form
	\[
		\langle \cdot , \cdot \rangle : A \times A \to k
	\]
	such that $\langle xa,b \rangle = \langle a , xb \rangle$ for all $a,b,x \in A$. We call this bilinear form the \emph{Frobenius form}.

	It is easy to verify that if $A$ is a Frobenius axial algebra $A$, then for each axis $a \in A$, the eigenspaces
    $A_\phi^a$ and $A_\psi^a$ are perpendicular for distinct $\phi$ and $\psi$ \cite[Proposition~3]{HRS15b}.
\end{defn}

It is already clear from Example \ref{ex:jordanengriess}\ref{ex:jordanengriess:griess} that there exists an important connection between axial algebras and groups. This connection comes from the following situation which is a generalization of a Majorana representation defined in \cite{IPSS10}.

\begin{defn} \label{def:axialrepresentation}
	Let $G$ be a group generated by a set $D$ of elements of order at most~$2$.
    An \emph{axial representation of $(G,D)$} is an isomorphism $\xi \colon G \to \Miy(A,\Omega)$, where
	\begin{enumerate}[(i)]
		\item\label{it:A}
            $(A,\Omega)$ is a $(\Phi,\star)$\dash axial algebra for a $\mathbb{Z}/2\mathbb{Z}$\dash graded fusion rule $(\Phi,\star)$,
		\item
            $\Omega$ is Miyamoto-closed,
		\item\label{it:D}
            $\xi(D) = \{ \tau_e \in \aut(A) \mid e \in \Omega\}$.
	\end{enumerate}
	We will often say that $(A, \Omega)$ \emph{is} an axial representation of $(G,D)$ and
    we will simply identify $G$ with $\Miy(A,\Omega)$ and $D$ with the set $\{ \tau_e \mid e \in \Omega\}$.
\end{defn}

\begin{prop}[{\cite[Lemma 2.4.1]{Reh15}}] \label{prop:tauconj}
	Let $(A,\Omega)$ be an axial representation of $(G,D)$. For each $t \in \aut(A)$ and each $(\Phi,\star)$-axis $a \in A$, $a^t$ is again a $(\Phi,\star)$-axis and $(\tau_a)^t = \tau_{a^t}$. In particular, $(\tau_x)^{\tau_y} = \tau_{x^{\tau_y}}$ for all $x,y \in \Omega$.
\end{prop}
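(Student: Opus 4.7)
The plan is to derive both assertions directly from the fact that an algebra automorphism transports the $(\Phi,\star)$-eigenspace data of an idempotent onto the corresponding data of its image.

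First I would check that $a^t$ is itself a $(\Phi,\star)$-axis. Since $t\in\aut(A)$ preserves the (commutative) product, $(a^t)^2 = (a\cdot a)^t = a^t$, so $a^t$ is an idempotent. For each $\phi\in\Phi$, the identity
\[
  b\in A^{a^t}_\phi \iff b\cdot a^t = \phi b \iff (b^{t^{-1}})\cdot a = \phi(b^{t^{-1}}) \iff b^{t^{-1}}\in A^a_\phi
\]
shows that $A^{a^t}_\phi = (A^a_\phi)^t$. Applying $t$ to the decomposition $A=\bigoplus_{\phi\in\Phi}A^a_\phi$ then yields $A=\bigoplus_{\phi\in\Phi}A^{a^t}_\phi$. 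For the fusion rule, take $x\in A^{a^t}_\phi$ and $y\in A^{a^t}_\psi$, write $x=x_0^t$, $y=y_0^t$ with $x_0\in A^a_\phi$ and $y_0\in A^a_\psi$, and compute $xy=(x_0 y_0)^t\in (A^a_{\phi\star\psi})^t = A^{a^t}_{\phi\star\psi}$. Hence $a^t$ is $(\Phi,\star)$-diagonalizable.

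Next I would verify $(\tau_a)^t=\tau_{a^t}$, interpreting $(\tau_a)^t = t^{-1}\tau_a t$. Both sides are linear, so it suffices to compare them on each eigenspace $A^{a^t}_\phi$. Pick $b\in A^{a^t}_\phi$; by the previous step $b^{t^{-1}}\in A^a_\phi$, so
\[
  b^{(\tau_a)^t} = \bigl((b^{t^{-1}})^{\tau_a}\bigr)^t = (\epsilon\, b^{t^{-1}})^t = \epsilon b,
\]
where $\epsilon=+1$ if $\phi\in\Phi_+$ and $\epsilon=-1$ if $\phi\in\Phi_-$. This is exactly the action of $\tau_{a^t}$ on $b$, by the $\mathbb{Z}/2\mathbb{Z}$-grading and the definition of the Miyamoto involution attached to $a^t$. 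Hence the two automorphisms coincide.

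The final statement is the special case $a=x\in\Omega$, $t=\tau_y$, which is legal because $\tau_y\in\Miy(A,\Omega)\subseteq\aut(A)$. There is essentially no obstacle here; the only points requiring care are bookkeeping around the conventions (whether $\aut(A)$ acts on the left or on the right, and that $(\tau_a)^t$ means $t^{-1}\tau_a t$), and the observation that the argument uses \emph{only} that $t$ is an algebra automorphism, not that it belongs to $\Miy(A,\Omega)$ — which is precisely why the proposition is useful for showing that the set of Miyamoto involutions is closed under conjugation by $\aut(A)$.
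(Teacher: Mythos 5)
Your proof is correct. The paper does not reprove this statement (it simply cites \cite[Lemma 2.4.1]{Reh15}), but your eigenspace-transport argument --- showing $A^{a^t}_\phi=(A^a_\phi)^t$ and then comparing $(\tau_a)^t$ with $\tau_{a^t}$ on each graded piece --- is the standard one and runs parallel to the paper's own proof of the module analogue in Lemma~\ref{lem:moduletorep}; your closing remark that only $t\in\aut(A)$ is used is exactly the point of stating the proposition in this generality.
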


\begin{rem} \label{rem:axialrepresentations}
	\begin{enumerate}[(i)]
		\item
            Of course, every axial algebra $(A,\Omega)$ with a $\mathbb{Z}/2\mathbb{Z}$\dash graded fusion rule such that $\Omega$ is Miyamoto-closed,
            is an axial representation for some $(G,D)$, namely $G = \Miy(A, \Omega)$ and $D = \{ \tau_e \mid e \in \Omega\}$.
            The more interesting question is: which groups $G$ admit an axial representation (for some generating set $D$ of elements of order at most $2$)?
		\item
            In Definition~\ref{def:axialrepresentation}, we explicitly allow the possibility that $D$ contains the trivial element,
            or equivalently, that $\tau_e$ is trivial for some $e \in \Omega$.
		\item
            Let $G$ be a group generated by a set $D$ of elements of order at most~$2$ which is invariant under conjugation.
            If $\xi \colon G \to \Miy(A,\Omega)$ is an isomorphism satisfying~\ref{it:A} and~\ref{it:D} of Definition~\ref{def:axialrepresentation},
            then $\Omega$ is not necessarily Miyamoto-closed, but we can always replace $\Omega$ by the possibly larger set of axes
            \[ \Omega' := \{ e \in A \text{ an axis} \mid \tau_e = \tau_f \text{ for some } f \in \Omega \} \]
            without changing the Miyamoto group.
            It now follows from Proposition~\ref{prop:tauconj} that $\Omega'$ is indeed Miyamoto-closed, and hence
            $(A, \Omega')$ is an axial representation of $(G,D)$.
	\end{enumerate}
\end{rem}

If $(A, \Omega)$ is an axial representation for $(G, D)$, then the map $\tau \colon \Omega \to D \colon e \mapsto \tau_e$ is not necessarily a bijection.
This motivates the following definition, which we have taken from \cite[Definition 6.8]{HSS17}.
\begin{defn}\label{def:uniquetype}
	Let $(A,\Omega)$ be an axial representation of $(G,D)$.
    We call the axial representation \emph{of unique type} if the map $\tau: \Omega \to D: e \mapsto \tau_e$ is a bijection.
\end{defn}

According to \cite[Theorem 6.10]{HSS17}, ``most'' axial representations that satisfy the Jordan fusion rule are of unique type; in particular, those arising from $3$-transposition groups fall in this class (see Proposition~\ref{prop:matsuo-miy}\ref{it:matsuoofuniquetype} below).
Axial representations of unique type behave nicer than others; see, for instance, Proposition~\ref{prop:ofuniquetype} below.

\begin{example}\label{ex:notunique}
	Not every axial representation is of unique type. Indeed, consider the $3$-dimensional algebra spanned by vectors $\mathbbm{1}$, $u$ and $v$ and commutative product defined by $u^2=v^2=\mathbbm{1}$, $uv = 0$ and such that $\mathbbm{1}$ is the identity. This is a Jordan algebra of Clifford type generated by the idempotents $e_1 = \frac{1}{2}+\frac{1}{2}u$, $e_2 = \frac{1}{2}+\frac{1}{2}v$, $\mathbbm{1}-e_1$ and $\mathbbm{1}-e_2$. For the $\mathbb{Z}/2\mathbb{Z}$-grading of the Jordan fusion rule $\Phi(\frac{1}{2})$ we get $\tau_{e_1} = \tau_{\mathbbm{1}-e_1}$ and $\tau_{e_2} = \tau_{\mathbbm{1}-e_2}$. This leads to an axial representation of $(\mathbb{Z}/2\mathbb{Z})^2$, which is not of unique type.

\end{example}

\section{Modules over axial algebras} \label{sec:modulesoveraxialalgebras}

The definition of an axial algebra leads to the following natural definition for modules over axial algebras, our main object of interest.

\begin{defn} \label{def:moduleoveraxialalgebra}
	Let $R$ be a commutative ring with identity and $(A,\Omega)$ a $(\Phi,\star)$\dash axial $R$\dash algebra. Let $M$ be an $R$\dash module equipped with a (right) $R$\dash bilinear action of $A$,
	\[
		M \times A \to M : (m,a) \mapsto m \cdot a.
	\]
	For each $e \in A$ and each $\phi \in R$, let $M_\phi^e = \{ m \in M \mid m \cdot e = \phi m \}$. Define, as usual, for every nonempty $\Lambda \subset R$, $M_\Lambda^e = \bigoplus_{\phi \in \Lambda} M_\phi^e$ and $M_\emptyset^e = \{0\}$. We call $M$ an \emph{$A$\dash module} if, for each $e \in \Omega$,
	\begin{enumerate}[(i)]
		\item there exists a decomposition $M = \bigoplus_{\phi \in \Phi} M_\phi^e$ and
		\item $m \cdot a \in M_{\phi \star \psi}^e$ whenever $m \in M_\phi^e$ and $a \in A_\psi^e$.
	\end{enumerate}
\end{defn}

Observe that $A$ is itself an $A$\dash module; we refer to it as the \emph{regular module} for $A$.

We can extend the definition of Miyamoto involutions and Frobenius forms to arbitrary modules over axial algebras.

\begin{defn}
	Let $A$ be an axial algebra for a $\mathbb{Z}/2\mathbb{Z}$\dash graded fusion rule and $M$ an $A$\dash module. For every $(\Phi,\star)$\dash axis $e \in A$ we define the \emph{Miyamoto involution $\mu_e \in \GL(M)$} of $M$:
	\[
		m^{\mu_e} = \begin{cases}
		\phantom{-}m & \text{if $m \in M_{\Phi_+}^e$}; \\
		-m & \text{if $m \in M_{\Phi_-}^e$}.
		\end{cases}
	\]
	Notice that $\mu_e$ is possibly trivial (namely when $M_{\Phi_-}^e = 0$).
\end{defn}

\begin{defn}
	Let $A$ be an axial $k$\dash algebra for a field $k$. A \emph{Frobenius $A$\dash module} is an $A$\dash module $M$ equipped with a bilinear form, called the \emph{Frobenius form},
	\[
		\langle \cdot , \cdot \rangle : M \times M \to k
	\]
	such that $\langle m \cdot a , n \rangle = \langle m , n \cdot a \rangle$ for all $m,n \in M$ and $a \in A$.
\end{defn}

The following proposition is reminiscent of Maschke's theorem for linear representations of finite groups.

\begin{prop} \label{prop:Maschke}
	Let $A$ be a finite dimensional $k$\dash algebra and $M$ a Frobenius $A$\dash module. Let $N$ be an $A$\dash submodule of $M$, i.e. a $k$\dash subspace for which $N \cdot A \subseteq N$, such that the Frobenius form is non-degenerate on $N$. Then there exists an $A$\dash submodule $N_0$ of $M$ such that $M = N \oplus N_0$.
\end{prop}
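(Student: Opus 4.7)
The plan is to take $N_0 := N^\perp = \{m \in M \mid \langle m, n \rangle = 0 \text{ for all } n \in N\}$, the right-orthogonal of $N$ with respect to the Frobenius form, and show that it is the desired complement. This mirrors the usual proof of Maschke-type results with non-degenerate invariant forms: the submodule property of $N_0$ will come from the Frobenius identity, while the direct sum decomposition will follow from the non-degeneracy hypothesis restricted to $N$.

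First I would verify that $N_0$ is an $A$-submodule. For $m \in N_0$, $a \in A$ and $n \in N$, the Frobenius identity gives $\langle m \cdot a, n \rangle = \langle m, n \cdot a \rangle$; since $N$ is an $A$-submodule we have $n \cdot a \in N$, and since $m \in N_0$ this pairing vanishes. Hence $m \cdot a \in N_0$. This is the only place where the Frobenius condition intervenes, and it is important that $\langle \cdot, \cdot\rangle$ shifts $a$ from the left argument to the right argument, matching the side on which $N$ is closed under the action.

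It then remains to show $N \cap N_0 = 0$ and $M = N + N_0$. The intersection is immediate: if $n \in N \cap N_0$ then $\langle n, n'\rangle = 0$ for every $n' \in N$, and non-degeneracy of the restricted form forces $n = 0$. For the sum, fix $m \in M$ and consider the linear functional $n \mapsto \langle m, n\rangle$ on $N$. Non-degeneracy of the form on $N$ provides an isomorphism $N \xrightarrow{\sim} N^*$, $n' \mapsto \langle n', \cdot\rangle|_N$, so there exists $n' \in N$ with $\langle m, n \rangle = \langle n', n\rangle$ for all $n \in N$; then $m - n' \in N_0$ and $m = n' + (m-n')$ lies in $N + N_0$. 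The only non-formal ingredient is the finite-dimensionality of $N$ used to identify $N$ with $N^*$, which is the natural counterpart to the finite-dimensionality of $A$ assumed in the hypothesis and is what I expect to be the only point requiring comment. Beyond this, the argument is entirely formal and exploits no feature of axial algebras other than the Frobenius property.
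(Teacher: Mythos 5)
Your proof is correct and follows the paper's argument exactly: take $N_0$ to be the orthogonal complement of $N$, use the Frobenius identity to see it is a submodule, and deduce $M = N \oplus N_0$ from non-degeneracy of the form on $N$. You merely spell out the last linear-algebra step (via $N \cong N^*$) that the paper dispatches by citing standard properties of orthogonal complements in finite dimensions.
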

\begin{proof}
	Let $N_0 = \{ m \in M \mid \langle m , n \rangle = 0 \text{ for all } n \in N \}$. For all $a \in A$, $n_0 \in N_0$ and $n \in N$, we have $\langle n_0 \cdot a , n \rangle = \langle n_0 , n \cdot a \rangle = 0$ since $n \cdot a \in N$. Hence $n_0 \cdot a \in N_0$ and $N_0$ is an $A$\dash submodule. Since we require the Frobenius form to be non-degenerate on $N$, it follows from the properties of orthogonal complements in finite dimensional vector spaces that $M = N \oplus N_0$.
\end{proof}

In the remainder of this section, we will study modules over axial representations. In Theorem~\ref{thm:moduletorep} below, we will show that every module over an axial representation $(A,\Omega)$ of $(G,D)$ leads to a group representation of a central extension $\U(A,\Omega)$ of $G$. We start by presenting the definition of this central extension $\U(A,\Omega)$ and relating it to $G$.

\begin{defn} \label{def:U}
	Let $(A,\Omega)$ be an axial representation of $(G,D)$. We define the group $\U(A,\Omega)$ as the group with presentation
	\[
		\left\langle t_e \text{ for each } e \in \Omega \Bigm\vert \begin{aligned}
			& (t_e)^2 = 1 && \text{ for all } e \in \Omega \\[-.7ex]
			& (t_x)^{t_y} = t_{x^{\tau_y}} && \text{ for all } x,y \in \Omega
		\end{aligned} \right\rangle.
	\]
	Note that we need $\Omega$ to be Miyamoto-closed in order to define this group.
\end{defn}

We prove some useful properties of this group.

\begin{prop} \label{prop:UA-prop} Let $(A,\Omega)$ be an axial representation of $(G,D)$.
	\begin{enumerate}[\rm(i)]
		\item \label{prop:UA-prop:epi} The map $\tau: \U(A,\Omega) \to G : t_e \mapsto \tau_e$ is a group epimorphism.
		\item \label{prop:UA-prop:central} The group $\U(A,\Omega)$ is a central extension of $G$.
		\item If $\Omega$ is finite, then so is $\U(A,\Omega)$.
	\end{enumerate}
\end{prop}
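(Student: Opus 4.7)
The universal property of the presentation of $\U(A,\Omega)$ reduces this to checking that the elements $\tau_e \in G$ satisfy the defining relations. The squares $\tau_e^2 = 1$ hold because each $\tau_e$ has order at most $2$ in $\aut(A)$, and the conjugation relations $(\tau_x)^{\tau_y} = \tau_{x^{\tau_y}}$ are exactly the content of Proposition~\ref{prop:tauconj}. This yields a well-defined homomorphism $\tau$, and surjectivity is immediate since by definition $G = \Miy(A,\Omega) = \langle \tau_e \mid e \in \Omega \rangle$.

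\textbf{Plan for (ii).} I would prove by induction on the length of a word in the generators $\{t_e : e \in \Omega\}$ (well-defined since each generator has order at most $2$) the identity
\[
(t_x)^w = t_{x^{\tau(w)}} \qquad \text{for every } x \in \Omega.
\]
The base case $w = t_y$ is the defining conjugation relation, and the inductive step $w = w_1 w_2$ uses $(t_x)^{w_1 w_2} = ((t_x)^{w_1})^{w_2}$ combined with the homomorphism property of $\tau$. For $w \in \ker \tau$ this specializes to $(t_x)^w = t_x$, showing that $w$ commutes with every generator $t_x$. Since the $t_x$ generate $\U(A,\Omega)$, this forces $w \in \Z(\U(A,\Omega))$.

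\textbf{Plan for (iii).} I proceed in two stages: first finiteness of $G$, then finiteness of $\U(A,\Omega)$. Since $\Omega$ is Miyamoto-closed, $G$ acts on the finite set $\Omega$, and this action is faithful because any automorphism of $A$ fixing the generating set $\Omega$ pointwise must be the identity; hence $G$ embeds in $\mathrm{Sym}(\Omega)$ and is finite. Next, let $T = \{t_e : e \in \Omega\} \subseteq \U(A,\Omega)$, a finite set since $|T| \leq |\Omega|$. The defining relation forces $T$ to be closed under conjugation, so $\U(A,\Omega)$ acts on $T$; the kernel is the centralizer of $T$ in $\U(A,\Omega)$, which equals $\Z(\U(A,\Omega))$ because $T$ generates the group. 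Hence $\U(A,\Omega)/\Z(\U(A,\Omega))$ embeds into the finite group $\mathrm{Sym}(T)$. Schur's classical theorem (a group with a central subgroup of finite index has finite commutator subgroup) then gives that $[\U(A,\Omega), \U(A,\Omega)]$ is finite, and the abelianization $\U(A,\Omega)^{\mathrm{ab}}$ is a quotient of $(\mathbb{Z}/2\mathbb{Z})^{|\Omega|}$ (it is generated by the $|\Omega|$ images of the $t_e$, each of order at most $2$), hence also finite. Combining these, $\U(A,\Omega)$ is finite. The main subtlety I expect is recognizing Schur's theorem as the right bridge from $\U(A,\Omega)/\Z(\U(A,\Omega))$ being finite to the finiteness of the commutator subgroup; with that in place the rest is routine bookkeeping.
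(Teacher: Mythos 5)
Your parts (i) and (ii) follow essentially the same route as the paper: (i) is the universal property of the presentation combined with Proposition~\ref{prop:tauconj}, and (ii) is the observation that any $w \in \ker\tau$ conjugates each generator $t_x$ to $t_{x^{\tau(w)}} = t_x$ and is therefore central; the paper writes this out directly for a word $t_{x_1}\cdots t_{x_n}$ rather than by induction on word length, but the content is identical. Part (iii) is where you genuinely diverge. The paper gives a direct combinatorial argument: the conjugation relations let one slide a repeated generator $t_x$ through a word at the cost of twisting the intermediate letters by $\tau_x$, namely $t_x t_{e_1}\cdots t_{e_n} t_x = t_{e_1^{\tau_x}}\cdots t_{e_n^{\tau_x}}$, so by pigeonhole every element of $\U(A,\Omega)$ is a product of at most $\lvert\Omega\rvert$ generators, and there are only finitely many such products. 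Your route instead has $\U(A,\Omega)$ act by conjugation on the finite set $T=\{t_e \mid e \in \Omega\}$ (closed under conjugation by the defining relations), identifies the kernel with $\Z(\U(A,\Omega))$ since $T$ generates, concludes that the center has finite index, and then combines Schur's theorem with finiteness of the abelianization (a quotient of $(\mathbb{Z}/2\mathbb{Z})^{\lvert\Omega\rvert}$). Both arguments are correct. The paper's is more elementary and self-contained, and even yields an explicit bound on $\lvert\U(A,\Omega)\rvert$ in terms of $\lvert\Omega\rvert$; yours imports a classical theorem but makes the structural picture (a group that is center-by-finite, hence finite-by-abelian with both pieces finite) more explicit. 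One minor remark: your preliminary stage establishing finiteness of $G$ via the faithful action on $\Omega$ is correct but is never actually used in the rest of your argument, since you work with the conjugation action of $\U(A,\Omega)$ on $T$ directly.
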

\begin{proof}
	\begin{enumerate}[(i)]
		\item Consider the map $\tau: \U(A,\Omega) \to G$ defined by $t_e \mapsto \tau_e$ for all $e \in \Omega$. This map is a group homomorphism since all relations that define $\U(A,\Omega)$ hold in $G$ by Proposition \ref{prop:tauconj}. Since the elements $\tau_e$ generate $G$, this map is surjective.
		\item We prove that the kernel of $\tau$ is contained in the center of $\U(A,\Omega)$. Let $u \coloneqq t_{x_1}t_{x_2}\dotsm t_{x_n} \in \ker(\tau)$ where $x_1,x_2,\dots,x_n \in \Omega$. Then $\tau_{x_1}\tau_{x_2}\dotsm\tau_{x_n} = 1$ and thus, for all $e \in \Omega$,
		\[
			t_e^u = t_e^{t_{x_1}t_{x_2}\dotsm t_{x_n}} = t_{e^{\tau_{x_1}\tau_{x_2}\dotsm\tau_{x_n}}} = t_e.
		\]
		Thus $u$ commutes with all $t_e$ and is therefore contained in the center of $\U(A,\Omega)$.
		\item We will show that every element of $\U(A,\Omega)$ can be written as a product of at most $\lvert \Omega \rvert$ elements $t_e$. Since only finitely many such products exist, $\U(A,\Omega)$ must be finite.

		Consider a product of more than $\lvert \Omega \rvert$ elements $t_e$. Some element $t_x$ will then appear at least twice and we can use the relation
		\[
			t_x t_{e_1} t_{e_2} \dotsm t_{e_n} t_x = (t_{e_1} t_{e_2} \dotsm t_{e_n})^{t_x} = t_{(e_1)^{\tau_x}} t_{(e_2)^{\tau_x}} \dotsm t_{(e_n)^{\tau_x}}
		\]
		to rewrite this element as a product of fewer $t_e$'s. \qedhere
	\end{enumerate}
\end{proof}

For axial representations of unique type, we can say even more.

\begin{prop} \label{prop:ofuniquetype} Suppose $(A,\Omega)$ is an axial representation of $(G,D)$ of unique type. Then the following hold.
\begin{enumerate}[\rm(i)]
	\item \label{prop:ofuniquetype:centerless} $G$ is centerless.
	\item $\U(A,\Omega)$ only depends on $G$ and $D$.
	\item \label{prop:ofuniquetype:kernel} The kernel of the group homomorphism $\tau:\U(A,\Omega) \to G : t_e \mapsto \tau_e$ coincides with the center of $\U(A,\Omega)$.
\end{enumerate}
\end{prop}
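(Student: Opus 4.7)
The plan is to prove the three parts in order: part~(i) does the real work, and parts~(ii) and~(iii) follow fairly directly from it together with Proposition~\ref{prop:UA-prop}.

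For part~(i), I would take $g \in Z(G)$ and show $g = 1$. For every $e \in \Omega$, centrality of $g$ gives $(\tau_e)^g = \tau_e$, while Proposition~\ref{prop:tauconj} rewrites the left-hand side as $\tau_{e^g}$. The hypothesis of unique type then forces $e^g = e$. Since $\Omega$ generates $A$ as an algebra and $g$ acts on $A$ as an automorphism (via the identification $G = \Miy(A,\Omega)$), fixing every generator makes $g$ act trivially, hence $g = 1$.

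For part~(ii), the bijection $\tau \colon \Omega \to D$ supplied by uniqueness of type allows me to re-index the generators of $\U(A,\Omega)$ by $D$. Applying Proposition~\ref{prop:tauconj} once more to identify $\tau_{x^{\tau_y}}$ with $(\tau_x)^{\tau_y}$, the defining presentation becomes
\[
\U(A,\Omega) \cong \left\langle t_d \text{ for } d \in D \mid t_d^2 = 1, \; (t_d)^{t_{d'}} = t_{d^{d'}} \right\rangle,
\]
which manifestly depends only on $(G,D)$. Note that $d^{d'}$ again lies in $D$, because $\Omega$ is Miyamoto-closed and Proposition~\ref{prop:tauconj} then shows that $D = \{\tau_e \mid e \in \Omega\}$ is closed under $G$\dash conjugation.

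For part~(iii), Proposition~\ref{prop:UA-prop}\ref{prop:UA-prop:central} already gives $\ker\tau \subseteq Z(\U(A,\Omega))$. Conversely, any $u \in Z(\U(A,\Omega))$ maps under $\tau$ to an element commuting with every $\tau_e$, hence $\tau(u) \in Z(G)$; by part~(i), $\tau(u) = 1$, so $u \in \ker\tau$.

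The main obstacle is concentrated in part~(i), where two separate features of the setup must be combined: uniqueness of type (so that $\tau_{e^g} = \tau_e$ implies $e^g = e$) and the fact that $\Omega$ generates $A$ (so that fixing every axis implies acting trivially on $A$). Without either of these, the argument breaks down — Example~\ref{ex:notunique} already shows that $G$ need not be centerless when unique type fails. Once~(i) is established, parts~(ii) and~(iii) reduce to routine manipulation of the presentation.
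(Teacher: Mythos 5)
Your proposal is correct and follows essentially the same route as the paper: part (i) is proved by showing central elements fix every axis (via Proposition \ref{prop:tauconj} and unique type) and hence act trivially on $A$, part (ii) by re-indexing the presentation of $\U(A,\Omega)$ through the bijection $\tau\colon\Omega\to D$, and part (iii) by combining (i) with Proposition \ref{prop:UA-prop}\ref{prop:UA-prop:central}. Your write-up merely spells out a few steps the paper leaves implicit.
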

\begin{proof}
	\begin{enumerate}[(i)]
		\item Suppose $\tau_{x_1} \tau_{x_2} \dotsm \tau_{x_n} \in \Z(G)$. Then, for all $e \in \Omega$,
		\[
			\tau_e = (\tau_e)^{\tau_{x_1} \tau_{x_2} \dotsm \tau_{x_n}} = \tau_{e^{\tau_{x_1} \tau_{x_2} \dotsm \tau_{x_n}}}.
		\]
		Since $(A,\Omega)$ is of unique type, the elements of $\Z(G)$ act trivially on $\Omega$ and hence also on~$A$.
		\item The presentation of Definition \ref{def:U} defining $\U(A,\Omega)$ can be retrieved from $(G,D)$. Since $(A,\Omega)$ is of unique type, the generators can be identified with the set $D$ and the relations are the conjugacy relations between elements of $D$.
		\item This follows from part \ref{prop:ofuniquetype:centerless} and Proposition \ref{prop:UA-prop}\ref{prop:UA-prop:central}. \qedhere
	\end{enumerate}
\end{proof}

\begin{rem}
	It is possible and might seem more natural to define a group $\U(G,D)$ in a similar way as in Definition~\ref{def:U} using the elements of $D$ rather than the idempotents of an axial representation. This group $\U(G,D)$ was already studied by H.~Cuypers and J.~Hall in the context of 3\dash transposition groups \cite[Proposition 2.1]{Hal06}. If $(A,\Omega)$ is an axial representation of $(G,D)$ of unique type, then $\U(G,D) \cong \U(A,\Omega)$. However, for the axial representation of $(\mathbb{Z}/2\mathbb{Z})^2$ from Example \ref{ex:notunique}, which is not of unique type, we have $\U(G,D) \cong (\mathbb{Z}/2\mathbb{Z})^2$ while $\U(A,\Omega) \cong (\mathbb{Z}/2\mathbb{Z})^4$.
\end{rem}

We are now able to state one of our main theorems.

\begin{thm} \label{thm:moduletorep}
	Let $(A,\Omega)$ be an axial representation of $(G,D)$ and let $M$ be an $A$\dash module. The map defined by
	\[
		\mu: \U(A,\Omega) \to \GL(M) : t_e \mapsto \mu_e
	\]
	for all $e \in \Omega$, is a group homomorphism.
\end{thm}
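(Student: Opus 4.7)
To prove that $\mu$ is a well-defined group homomorphism, the plan is to check that the defining relations of the presentation in Definition~\ref{def:U} are satisfied by the images $\mu_e \in \GL(M)$. Concretely, this means verifying that $(\mu_e)^2 = \mathrm{id}_M$ for every $e \in \Omega$ and that $\mu_y^{-1} \mu_x \mu_y = \mu_{x^{\tau_y}}$ for every $x, y \in \Omega$.

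The first relation is immediate from the definition of $\mu_e$: since the $\mathbb{Z}/2\mathbb{Z}$\dash grading gives a decomposition $M = M_{\Phi_+}^e \oplus M_{\Phi_-}^e$ and $\mu_e$ acts as $+1$ on the first summand and $-1$ on the second, squaring yields the identity.

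The heart of the argument is a compatibility identity between the algebra\dash level Miyamoto involution $\tau_y$ and the module\dash level involution $\mu_y$, namely
\[
    (m \cdot a)^{\mu_y} = m^{\mu_y} \cdot a^{\tau_y}
    \quad \text{for all } m \in M,\ a \in A.
\]
By $R$\dash bilinearity it suffices to check this when $m \in M_\phi^y$ and $a \in A_\psi^y$ for some $\phi,\psi \in \Phi$. In that case $m \cdot a \in M_{\phi \star \psi}^y$, and the $\mathbb{Z}/2\mathbb{Z}$\dash grading hypothesis forces $\phi \star \psi \subseteq \Phi_+$ or $\Phi_-$ exactly according to the product of the signs of $\phi$ and $\psi$. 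Writing $\epsilon(\chi) = \pm 1$ for $\chi \in \Phi_\pm$, both sides therefore equal $\epsilon(\phi)\epsilon(\psi)\,(m \cdot a)$, establishing the identity.

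Applying the identity to $a = x$ gives $\mu_y \circ \rho(x) = \rho(x^{\tau_y}) \circ \mu_y$, where $\rho(a)$ denotes right multiplication by $a$ on $M$. Consequently $\mu_y$ maps the $\phi$\dash eigenspace $M_\phi^x$ isomorphically onto $M_\phi^{x^{\tau_y}}$ for every $\phi \in \Phi$ (note that $x^{\tau_y} \in \Omega$ because $\Omega$ is Miyamoto-closed, so $\mu_{x^{\tau_y}}$ is defined). Since the partition $\Phi = \Phi_+ \sqcup \Phi_-$ used to define the involution is independent of the axis, $\mu_y$ intertwines $\mu_x$ with $\mu_{x^{\tau_y}}$, yielding $\mu_{x^{\tau_y}} = \mu_y \mu_x \mu_y^{-1}$. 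This establishes the second family of relations and completes the proof. The main obstacle is really just finding and proving the intertwining identity; once it is in hand, both relations drop out by direct eigenspace bookkeeping, so no deeper structural input is needed.
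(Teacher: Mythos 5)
Your proof is correct and follows essentially the same route as the paper: the paper's Lemma~\ref{lem:moduletorep} establishes exactly your intertwining identity $(m \cdot a)^{\mu_x} = m^{\mu_x} \cdot a^{\tau_x}$ by the same eigenspace-by-eigenspace sign bookkeeping, deduces $(M_\phi^y)^{\mu_x} = M_\phi^{y^{\tau_x}}$, and then concludes $(\mu_x)^{\mu_y} = \mu_{x^{\tau_y}}$, after which the theorem follows by checking the defining relations of the presentation of $\U(A,\Omega)$. Your $\epsilon(\phi)\epsilon(\psi)$ formulation is just a compact rewriting of the paper's four-case check, so there is nothing substantively different.
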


We start by proving the following lemma, the proof of which is inspired by \cite[Lemma~2.4.1]{Reh15}.

\begin{lem} \label{lem:moduletorep}
	For all $m\in M$, $a \in A$ and all $x,y \in \Omega$ we have:
	\begin{enumerate}[\rm(i)]
		\item \label{lem:moduletorep:i} $(m \cdot a)^{\mu_x} = m^{\mu_x} \cdot a^{\tau_x}$,
		\item \label{lem:moduletorep:ii} $(M_\phi^y)^{\mu_x} = M_\phi^{y^{\tau_x}}$,
		\item \label{lem:moduletorep:iii} $(\mu_x)^{\mu_y} = \mu_{x^{\tau_y}}$.
	\end{enumerate}
\end{lem}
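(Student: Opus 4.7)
My approach is to treat the three parts sequentially, since each builds on the previous, with essentially all of the content concentrated in part~\ref{lem:moduletorep:i}; parts \ref{lem:moduletorep:ii} and \ref{lem:moduletorep:iii} will follow formally.

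For part \ref{lem:moduletorep:i}, the plan is to use bilinearity of the action to reduce to pure eigenvectors. Decompose $m = \sum_{\phi \in \Phi} m_\phi$ with $m_\phi \in M_\phi^x$ and $a = \sum_{\psi \in \Phi} a_\psi$ with $a_\psi \in A_\psi^x$; by bilinearity of both sides of the identity, it suffices to verify the claim when $m \in M_\phi^x$ and $a \in A_\psi^x$. Then $m \cdot a \in M_{\phi \star \psi}^x$, and since the fusion rule is $\mathbb{Z}/2\mathbb{Z}$-graded, every element of $\phi \star \psi$ lies in the same grading component, determined by the product of the signs of $\phi$ and $\psi$. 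Writing $\epsilon_\phi \in \{\pm 1\}$ for the sign attached to an element of $\Phi_\pm$, the $\mathbb{Z}/2\mathbb{Z}$-grading gives $\epsilon_{\chi} = \epsilon_\phi \epsilon_\psi$ for every $\chi \in \phi \star \psi$. Hence $(m \cdot a)^{\mu_x} = \epsilon_\phi \epsilon_\psi (m \cdot a) = (\epsilon_\phi m)(\epsilon_\psi a) = m^{\mu_x} \cdot a^{\tau_x}$. I expect the bookkeeping of signs via the grading to be the only subtle point; the remainder is formal.

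For part \ref{lem:moduletorep:ii}, I will take $n \in M_\phi^y$, so that $n \cdot y = \phi n$, and apply $\mu_x$ to both sides. By part \ref{lem:moduletorep:i}, the left-hand side becomes $n^{\mu_x} \cdot y^{\tau_x}$, and the right-hand side $\phi n^{\mu_x}$, showing $n^{\mu_x} \in M_\phi^{y^{\tau_x}}$; so $(M_\phi^y)^{\mu_x} \subseteq M_\phi^{y^{\tau_x}}$. Since $\Omega$ is Miyamoto-closed, $y^{\tau_x} \in \Omega$, and applying the same inclusion with $y$ replaced by $y^{\tau_x}$ yields $(M_\phi^{y^{\tau_x}})^{\mu_x} \subseteq M_\phi^y$. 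Because $\mu_x^2 = \mathrm{id}$ (it is $\pm 1$ on each component of the $\mathbb{Z}/2\mathbb{Z}$-graded decomposition of $M$ with respect to $x$), applying $\mu_x$ to this last inclusion produces the reverse containment.

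For part \ref{lem:moduletorep:iii}, the plan is to evaluate both sides on the eigenspace decomposition $M = \bigoplus_{\phi \in \Phi} M_\phi^{x^{\tau_y}}$, which exists because $x^{\tau_y} \in \Omega$. By part~\ref{lem:moduletorep:ii} applied with $x,y$ interchanged, $\mu_y^{-1}$ maps $M_\phi^{x^{\tau_y}}$ onto $M_\phi^x$; on the latter, $\mu_x$ acts as the scalar $\epsilon_\phi$; finally $\mu_y$ returns to $M_\phi^{x^{\tau_y}}$. The net effect of $\mu_y^{-1} \mu_x \mu_y$ on $M_\phi^{x^{\tau_y}}$ is therefore multiplication by $\epsilon_\phi$, which is exactly how $\mu_{x^{\tau_y}}$ acts. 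Since this holds on each piece of a direct sum decomposition of $M$, the two linear maps coincide, proving $(\mu_x)^{\mu_y} = \mu_{x^{\tau_y}}$.
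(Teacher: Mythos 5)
Your proof is correct and takes essentially the same route as the paper's: part \ref{lem:moduletorep:i} by reducing to graded eigenvectors and tracking signs through the $\mathbb{Z}/2\mathbb{Z}$\dash grading, part \ref{lem:moduletorep:ii} by applying \ref{lem:moduletorep:i} to the defining equation of the eigenspace together with $\mu_x^2 = \mathrm{id}$, and part \ref{lem:moduletorep:iii} by comparing the two operators on an eigenspace decomposition of $M$. The only (cosmetic) differences are that you refine the paper's $\Phi_\pm$ case analysis in \ref{lem:moduletorep:i} to individual eigenspaces, and in \ref{lem:moduletorep:iii} you decompose $M$ relative to $x^{\tau_y}$ where the paper decomposes relative to $x$.
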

\begin{proof}
\begin{enumerate}[(i)]
	\item Let $m \in M^x_{\Phi_+}$ and $a \in A^x_{\Phi_+}$, then $m \cdot a \in M^x_{\Phi_+}$ and therefore $(m \cdot a)^{\mu_x} = m \cdot a$, $m^{\mu_x} = m$ and $a^{\tau_x} = a$. For $m \in M_{\Phi_-}^x$ and $a \in A_{\Phi_+}^x$ (resp. $m \in M_{\Phi_+}^x$ and $a \in A_{\Phi_-}^x$) we have $m \cdot a \in M_{\Phi_-}$ and hence $(m \cdot a)^{\mu_x} = -m \cdot a$, $m^{\mu_x} = - m$ (resp. $m^{\mu_x} = m$) and $a^{\tau_x} = a$ (resp. $a^{\tau_x} = -a$). Finally, let $m \in M_{\Phi_-}^x$ and $a \in A_{\Phi_-}^x$. Now $m \cdot a \in M_{\Phi_+}^x$ and thus $(m \cdot a)^{\mu_x} = m \cdot a$, $m^{\mu_x} = -m$ and $a^{\tau_x} = -a$. Since $M = M^x_{\Phi_+} \oplus M^x_{\Phi_-}$ and $A = A^x_{\Phi_+} \oplus A^x_{\Phi_-}$, the desired property follows by linearity of the action of $A$ on $M$.
	\item Let $m \in M_\phi^y$. By \ref{lem:moduletorep:i}, it follows that $m^{\mu_x} \cdot y^{\tau_x} =(m \cdot y)^{\mu_x} = \phi m^{\mu_x}$, i.e. $m^{\mu_x} \in M_\phi^{y^{\tau_{x}}}$. Therefore $(M_\phi^y)^{\mu_x} \subseteq M_\phi^{y^{\tau_x}}$. Similarly, $(M_\phi^{y^{\tau_x}})^{\mu_x} \subseteq M_\phi^{y^{\tau_x \tau_x}} = M_\phi^y$ and thus $M_\phi^{y^{\tau_x}} = (M_\phi^{y^{\tau_x}})^{\mu_x^2} \subseteq (M_\phi^y)^{\mu_x}$. We find that $(M_\phi^y)^{\mu_x} = M_\phi^{y^{\tau_x}}$.
	\item For $m \in M_\phi^x$, we have $m^{\mu_x} = \varepsilon m$ with $\varepsilon=\pm 1$ only depending on whether $\phi \in \Phi_+$ or $\phi \in \Phi_-$. By \ref{lem:moduletorep:ii}, $m^{\mu_y} \in M_\phi^{x^{\tau_y}}$ and therefore $m^{\mu_y \mu_{x^{\tau_y}}} = \varepsilon m^{\mu_y}$. By applying $\mu_y$,
	\[
		m^{\mu_y \mu_{x^{\tau_y}} \mu_y} = \varepsilon m = m^{\mu_x}
	\]
	follows for all $m \in M_\phi^x$.
	Since $M = \bigoplus M_\phi^x$, by linearity we conclude that $(\mu_x)^{\mu_y} = \mu_{x^{\tau_y}}$. \qedhere
\end{enumerate}
\end{proof}

\begin{proof}[Proof of theorem \ref{thm:moduletorep}]
	By the definition of $\U(A,\Omega)$ and the fact that all $\mu_e$ have order at most 2, the theorem follows from Lemma \ref{lem:moduletorep}\ref{lem:moduletorep:iii}.
\end{proof}

\section{Matsuo algebras} \label{sec:matsuoalgebras}

In Section \ref{sec:modulesovermatsuoalgebras}, we will restrict our attention to the study of modules over Matsuo algebras, a special type of axial algebras.
This section introduces these algebras and explains their connection with Fischer spaces and 3\dash transposition groups.

\begin{defn} \label{def:matsuo}
	\begin{enumerate}[(i)]
		\item
            A \emph{point-line geometry} $\mG$ is a pair $(\mP,\mL)$ where $\mP$ is a set whose elements are called \emph{points} and $\mL$ is a set of subsets of $\mL$. The elements of $\mL$ are called \emph{lines}.
		\item
            Two distinct points $x$ and $y$ of a point-line geometry are said to be \emph{collinear} if there is a line containing both and we denote this by $x \sim y$. We write $x \nsim y$ if $x$ and $y$ are not collinear. If $x \nsim y$ for all $y \neq x$, then we call $x$ an \emph{isolated point}.
		\item
            A \emph{subspace} of point-line geometry $(\mP,\mL)$ is a point-line geometry $(\mP^\prime,\mL^\prime)$ such that:
            \begin{itemize}
                \item $\mP^\prime \subseteq \mP$ and $\mL\subseteq \mL^\prime$,
                \item if $x,y \in \mP^\prime$ and $x,y \in \ell$ for some $\ell \in \mL$, then $\ell \in \mL^\prime$.
            \end{itemize}
            The point-line geometry $(\mP,\mL)$ is a subspace of itself. If $(\mP_1,\mL_1)$ and $(\mP_2,\mL_2)$ are two subspaces of $(\mP,\mL)$, then so is $(\mP_1 \cap \mP_2,\mL_1 \cap \mL_2)$. Therefore, for any set of points and any set of lines, there is a smallest subspace containing those points and lines; we call it the \emph{subspace generated by} those points and lines.
		\item
            Two points $x$ and $y$ of a point-line geometry are called \emph{connected} if there exist points $x=x_0,x_1,\dots,x_n=y$ such that $x_{i-1} \sim x_{i}$ for all $1 \leq i \leq n$. This relation defines an equivalence relation on the set of points of a point-line geometry. The subspaces generated by its equivalence classes are called the \emph{connected components} of the point-line geometry.
		\item
            An \emph{isomorphism} between two point-line geometries $\mG=(\mP,\mL)$ and $\mL=(\mP^\prime,\mL^\prime)$ is a bijection $\theta: \mP \to \mP^\prime$ that induces a bijection between $\mL$ and $\mL^\prime$. We write $\mG \cong \mG^\prime$ and call $\mG$ and $\mG^\prime$ \emph{isomorphic} if an isomorphism between them exists.
		\item
            Let $\mG$ be a point-line geometry such that through any two points there is at most one line and such that each line contains exactly three points. Such a point-line geometry is called a \emph{partial triple system}. If $x$ and $y$ are collinear points in a partial triple system, there is a unique third point on the unique line through $x$ and $y$ and we will denote it by $x \wedge y$.
		\item
            Let $\mG$ be a partial triple system. We call $\mG$ a \emph{Fischer space} if each subspace generated by two distinct intersecting lines is isomorphic to the dual affine plane of order 2 or the affine plane of order 3. A sketch of these two point-line geometries is given in Figure \ref{fig:fischerspaces}.
	\end{enumerate}
\end{defn}

\begin{figure}[ht]
\centering
\begin{tikzpicture}[yshift=5cm,scale=0.8]
	\draw[thick, name path=l1] (0,0) -- (2,6) -- (4,0);
	\draw[thick, name path=l2] (-0.5,0.5) -- (5.5,2.5) -- (-0.5,4.5);
	\path[name intersections={of=l1 and l2, by={2,3,4,5}}];
	\coordinate (1) at (2,6);
	\coordinate (6) at (5.5,2.5);
	\node[fill = white, circle, inner sep = 2.5pt] at (1){};
	\draw[thick] (1) circle (1.5pt);
	\node[above, outer sep = 1pt] at (1){};
	\node[fill = white, circle, inner sep = 2.5pt] at (2){};
	\draw[thick] (2) circle (1.5pt);
	\node[above left, outer sep = 1pt] at (2){};
	\node[fill = white, circle, inner sep = 2.5pt] at (3){};
	\draw[thick] (3) circle (1.5pt);
	\node[above left, outer sep = 1pt] at (3){};
	\node[fill = white, circle, inner sep = 2.5pt] at (4){};
	\draw[thick] (4) circle (1.5pt);
	\node[below right, outer sep = 1pt] at (4){};
	\node[fill = white, circle, inner sep = 2.5pt] at (5){};
	\draw[thick] (5) circle (1.5pt);
	\node[above right, outer sep = 1pt] at (5){};
	\node[fill = white, circle, inner sep = 2.5pt] at (6){};
	\draw[thick] (6) circle (1.5pt);
	\node[right, outer sep = 1pt] at (6){};
\begin{scope}[scale=0.70,shift={(11,1.5)}]
	\clip (-2,-2) rectangle (8,8);
	\node[fill = white, inner sep = 2.5pt, circle] (1) at (0,6){};
	\node[fill = white, inner sep = 2.5pt, circle] (2) at (3,6){};
	\node[fill = white, inner sep = 2.5pt, circle] (3) at (6,6){};
	\node[fill = white, inner sep = 2.5pt, circle] (4) at (0,3){};
	\node[fill = white, inner sep = 2.5pt, circle] (5) at (3,3){};
	\node[fill = white, inner sep = 2.5pt, circle] (6) at (6,3){};
	\node[fill = white, inner sep = 2.5pt, circle] (7) at (0,0){};
	\node[fill = white, inner sep = 2.5pt, circle] (8) at (3,0){};
	\node[fill = white, inner sep = 2.5pt, circle] (9) at (6,0){};
	\draw[thick] (1) circle (2pt);
	\draw[thick] (2) circle (2pt);
	\draw[thick] (3) circle (2pt);
	\draw[thick] (4) circle (2pt);
	\draw[thick] (5) circle (2pt);
	\draw[thick] (6) circle (2pt);
	\draw[thick] (7) circle (2pt);
	\draw[thick] (8) circle (2pt);
	\draw[thick] (9) circle (2pt);
	\draw[thick] (1) -- (2) -- (3);
	\draw[thick] (4) -- (5) -- (6);
	\draw[thick] (7) -- (8) -- (9);
	\draw[thick] (1) -- (4) -- (7);
	\draw[thick] (2) -- (5) -- (8);
	\draw[thick] (3) -- (6) -- (9);
	\draw[thick] (1) -- (5) -- (9);
	\draw[thick] (3) -- (5) -- (7);
	\draw[thick] (1) -- (8);
	\draw[thick] (8) to [out = -63.435,in = -26.565, looseness=2.5] (6);
	\draw[thick] (6) -- (1);
	\draw[thick] (7) -- (2);
	\draw[thick] (2) to [out = 63.435,in = 26.565, looseness=2.5] (6);
	\draw[thick] (6) -- (7);
	\draw[thick] (9) -- (2);
	\draw[thick] (2) to [out = {180-63.435},in = {180-26.565}, looseness=2.5] (4);
	\draw[thick] (4) -- (9);
	\draw[thick] (3) -- (8);
	\draw[thick] (8) to [out = {180+63.435},in = {180+26.565}, looseness=2.5] (4);
	\draw[thick] (4) -- (3);
	\node[above left] at (1){};
	\node[outer sep = 6pt, above] at (2){};
	\node[above right] at (3){};
	\node[outer sep = 5pt, left] at (4){};
	\node[outer sep = 5pt, anchor = {180+20}] at (5){};
	\node[outer sep = 5pt, right] at (6){};
	\node[below left] at (7){};
	\node[outer sep = 6pt, below] at (8){};
	\node[below right] at (9){};
\end{scope}
\end{tikzpicture}
\caption{the dual affine plane of order 2 and the affine plane of order 3}\label{fig:fischerspaces}
\end{figure}
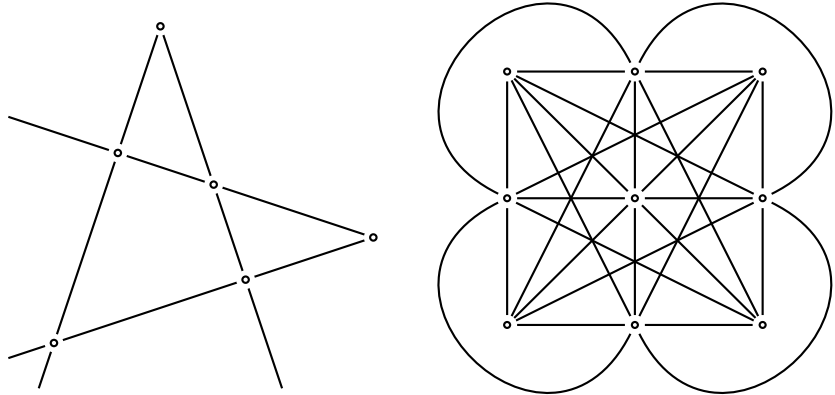

The main motivation for studying Fischer spaces is their connection with $3$-transposition groups, due to F. Buekenhout~\cite{Bue74};
see Proposition~\ref{prop:Bue} below.

\begin{defn}[\cite{Asc97}]
	A \emph{3\dash transposition group} is a pair $(G,D)$ where $D$ is a generating set of involutions of $G$ closed under conjugation such that the order of the product of any two elements of $D$ is at most $3$.
\end{defn}

\begin{example}
	\begin{enumerate}[(i)]
		\item All symmetric groups together with their set of transpositions form a 3\dash transposition group.
		\item The Fischer groups $Fi_{22}$, $Fi_{23}$ and $Fi_{24}$ are 3\dash transposition groups.
	\end{enumerate}
\end{example}


In the connection between Fischer spaces and $3$-transposition groups that we will need, it will be of importance to treat isolated points in Fischer spaces with some care.

\begin{defn}\label{def:3t-Fs}
	\begin{enumerate}[(i)]
		\item
            Let $(\mP,\mL)$ be a Fischer space and let $\mP^\prime \subseteq \mP$ be the set of isolated points of $(\mP,\mL)$.
            Then $(\mP \setminus \mP^\prime,\mL)$ is clearly a Fischer space without isolated points
            that we will denote by $(\mP,\mL)^\circ$.
		\item\label{it:tau-Fs}
            If $\mG=(\mP,\mL)$ is a Fischer space, then we associate with each point $x \in \mP$ an automorphism $\tau_x \in \aut(\mG)$
            defined as
            \[ \tau_x \colon \mP \to \mP \colon y \mapsto \begin{cases}
                x \wedge y & \text{if } y \sim x, \\
                y & \text{if } y \nsim x .
            \end{cases} \]
            Notice that $\tau_x$ is an involution unless $x$ is an isolated point (in which case $\tau_x$ is trivial).
            These involutions not only leave the set of points of the Fischer space invariant but also map collinear points to collinear points;
            hence they induce automorphisms of the Fischer space.

            Now let $D = \{ \tau_x \mid x \in \mP \text{ and } \tau_x \neq 1 \}$ and $G = \langle D \rangle \leq \aut(\mG)$;
            then we define $f(\mG) := (G, D)$.
		\item
            Let $(G,D)$ be a 3\dash transposition group. Then we write $(G,D)^\circ$ for the 3\dash transposition group $(G/\Z(G), \{ d\Z(G) \mid d \in D\setminus \Z(G)\})$.
		\item
            Let $(G,D)$ be a 3\dash transposition group. Let $\mP = D$ and let $\mL=\{\{c,d,c^d = d^c\} \mid \order(cd) = 3 \}$. Then $g(G,D) \coloneqq (\mP,\mL)$ is a point-line geometry.
	\end{enumerate}
\end{defn}

\begin{prop}[\cite{Bue74}] \label{prop:Bue}
	Let $\mG$ be a Fischer space and let $(G,D)$ be a 3\dash transposition group. Then
	\begin{enumerate}[\rm(i)]
		\item
            $g(G,D)$ is a Fischer space,
		\item
            $f(\mG)$ is a 3\dash transposition group,
		\item\label{it:fg}
            $f(g(G,D)) \cong (G,D)^\circ$,
		\item
            $g(f(\mG)) \cong \mG^\circ$.
	\end{enumerate}
\end{prop}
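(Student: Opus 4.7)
The plan is to verify each of the four statements in turn, using the classical correspondence between relations in a 3-transposition group and the configuration axioms of a Fischer space; the full argument is due to Buekenhout~\cite{Bue74}, but the key ideas are straightforward to outline. For (i), if $c, d \in D$ with $\order(cd) = 3$, then expanding $(cd)^3 = 1$ together with $c^2 = d^2 = 1$ yields $cdc = dcd$, so $c^d = d^c$ and each line $\{c, d, c^d\}$ is a well-defined 3-element subset of $D$. Uniqueness of the line through two collinear points is automatic. For the Fischer space axiom, let $\ell_1 = \{c, d, c^d\}$ and $\ell_2 = \{c, e, c^e\}$ be two distinct lines meeting at $c$, and consider $H \coloneqq \langle c, d, e\rangle$. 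The structure of $H$ is controlled by $\order(de) \in \{2, 3\}$: when $\order(de) = 2$ the group $H$ is isomorphic to $S_4$ and the generated subspace is the dual affine plane of order $2$ on $6$ points; when $\order(de) = 3$ the group $H$ is isomorphic to $3^2 \rtimes S_3$ and the subspace is the affine plane of order $3$ on $9$ points, matching Figure~\ref{fig:fischerspaces}.

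For (ii), the set $D$ generates $G$ by definition and closure under conjugation follows from the identity $(\tau_x)^{\tau_y} = \tau_{x^{\tau_y}}$, which holds in $\aut(\mG)$ since $\tau_y$ is itself an automorphism of $\mG$. For $x \nsim y$, the involutions $\tau_x$ and $\tau_y$ fix each other's points; combined with the subspace axiom from (i) this forces them to commute globally, so $\order(\tau_x\tau_y) \leq 2$. For $x \sim y$, the composition $\tau_x\tau_y$ cycles through $\{x, y, x \wedge y\}$ with order $3$ on that line, and the subspace axiom again guarantees that this extends consistently to give order~$3$ on all of $\mG$. For (iii), set $\mG = g(G,D)$ and observe that for $x \in D$, the involution $\tau_x \in \aut(\mG)$ coincides with conjugation by $x$ restricted to $D$, since $x \wedge y = x^y$ when $\order(xy) = 3$ and $x$ commutes with $y$ otherwise. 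The resulting homomorphism $G \to \aut(\mG)$ has kernel $C_G(D) = \Z(G)$ (because $D$ generates $G$), and $\tau_x = 1$ iff $x \in \Z(G)$, so comparison with Definition~\ref{def:3t-Fs} gives $f(g(G,D)) \cong (G,D)^\circ$. For (iv), the assignment $x \mapsto \tau_x$ from the non-isolated points of $\mG$ to $D(f(\mG))$ is surjective by construction; for injectivity, if $x \neq y$ are both non-isolated and $x \sim y$, then $\tau_y$ moves $x$ while $\tau_x$ fixes $x$, and if $x \nsim y$, then picking any $z$ collinear with $x$ either yields $y \nsim z$ (so $\tau_y$ fixes $z$ while $\tau_x$ does not) or $y \sim z$ (so $\tau_x(z) = x \wedge z$ and $\tau_y(z) = y \wedge z$ would have to coincide, forcing the line $\{x, z, x\wedge z\}$ to equal $\{y, z, y\wedge z\}$ by the partial-triple property, contradicting $x \neq y$). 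Line preservation is then immediate from the definitions.

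The main obstacle is the case analysis in (i) and its corresponding globalization in (ii), both of which come down to classifying 3-generator 3-transposition groups of the form $\langle c, d, e\rangle$ with $\order(cd) = \order(ce) = 3$. This is exactly the content of~\cite{Bue74}; once that dichotomy is in place, the remaining verifications reduce to bookkeeping with the relations among the involutions, and the four isomorphisms fall out by tracking where points and lines go under the maps $f$ and $g$.
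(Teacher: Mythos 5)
The paper itself offers no proof of Proposition~\ref{prop:Bue}; it is quoted from Buekenhout \cite{Bue74}, so your attempt can only be compared with the classical argument. Your treatment of (iii) and (iv) is essentially that argument and is sound: $\tau_x$ on $g(G,D)$ is conjugation by $x$ restricted to $D$, the kernel of the conjugation action is $C_G(D)=\Z(G)$ because $D$ generates $G$, and your injectivity argument for $x\mapsto\tau_x$ on non-isolated points is correct. The sketch of (ii) is also acceptable in spirit, although the ``subspace axiom'' you invoke there is part of the \emph{definition} of a Fischer space, not a consequence of (i).

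Part (i), however, contains a genuine error: the dichotomy on $\order(de)$ is wrong in both directions. Take $G=S_4$ with $c=(1\,2)$, $d=(1\,3)$, $e=(1\,4)$: then $\order(cd)=\order(ce)=\order(de)=3$ and the lines $\{c,d,c^d\}=\{(1\,2),(1\,3),(2\,3)\}$ and $\{c,e,c^e\}=\{(1\,2),(1\,4),(2\,4)\}$ are distinct, yet $H=\langle c,d,e\rangle=S_4$ and the generated subspace is the dual affine plane of order $2$. So $\order(de)=3$ does not force the affine plane of order $3$; in the dual affine plane each point off $\ell_1\cup\ell_2$'s intersection has exactly one non-collinear mate, so both orders $2$ and $3$ occur among pairs $(d,e)$, and the correct dichotomy is whether \emph{every} point of $\ell_1\setminus\{c\}$ is collinear with \emph{every} point of $\ell_2\setminus\{c\}$. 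Moreover, even in the genuine affine-plane case the isomorphism type of $H$ is not determined by the configuration: in the $3$-transposition group $3^2{:}2$ of Example~\ref{ex:3trans}\ref{ex:3trans:AG23}, the three transpositions $t$, $at$, $bt$ form two distinct intersecting lines but generate the whole group of order $18$, not $3^2{:}S_3$ of order $54$ (which is what the same configuration generates inside $\U$, cf.\ Theorem~\ref{thm:universal3trans}); only $H/\Z(H)$ and the induced geometry on $D\cap H$ are determined. Hence the strategy ``classify $H$ up to isomorphism and read off the geometry'' cannot work as stated; the classical argument analyses the permutation action of $H$ on the points of the generated subspace, which is insensitive to central extensions. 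Since you defer the classification to \cite{Bue74} anyway, the gap is localised, but the case split as written would derail anyone trying to fill in the details.
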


\begin{example} \label{ex:3trans}
	\begin{enumerate}[(i)]
		\item The Fischer space corresponding to the 3\dash transposition group $S_4$ is the dual affine plane of order 2.
		\item \label{ex:3trans:AG23} The 3\dash transposition group related to the affine plane of order 3 is a semidirect product $3^2:2$ where the action is given by inversion.
	\end{enumerate}
\end{example}

We now present the definition of Matsuo algebras, which will be instances of axial algebras; see Proposition~\ref{prop:matsuo-eigenspaces} below.
When the Matsuo algebra arises from a Fischer space, its fusion rules will be $\mathbb{Z}/2\mathbb{Z}$-graded; see Proposition~\ref{prop:matsuofischer} below.

\begin{defn}
    Let $k$ be a field with $\Char(k) \neq 2$, let $\alpha \in k \setminus \{0,1\}$ and let $\mG = (\mP, \mL)$ be a partial triple system.
    Define the Matsuo algebra $M_\alpha(\mG)$ as the $k$\dash vector space with basis $\mP$ and multiplication defined by linearly extending
    \[
        xy = \begin{cases}
            x & \text{if } x=y, \\
            0 & \text{if } x \nsim y, \\
            \frac{\alpha}{2} (x + y - x \wedge y) & \text{if } x \sim y,
        \end{cases}
    \]
    for all $x,y \in \mP$.
\end{defn}

The following proposition gives us a decomposition of a Matsuo algebra as a direct sum of eigenspaces for any $x \in \mP$.

\begin{prop}[{\cite[Theorem 6.2]{HRS15b}}] \label{prop:matsuo-eigenspaces} For each $x \in \mP$, the eigenspaces of $x$ in $M_\alpha(\mG)$ are
\begin{align*}
	&\langle x \rangle \text{, its 1-eigenspace,} \\
	&\langle y + x \wedge y - \alpha x \mid y \sim s \rangle \oplus \langle y \mid y \nsim x \rangle \text{, its 0-eigenspace,} \\
	&\langle y - x \wedge y \mid y \sim x \rangle \text{, its $\alpha$-eigenspace,}
\end{align*}
and the algebra $M_\alpha(\mG)$ decomposes as a direct sum of these eigenspaces.
\end{prop}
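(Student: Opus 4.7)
The plan is to verify the three claimed eigenspace descriptions by direct computation from the multiplication rule, and then show that the three subspaces span $M_\alpha(\mG)$ and meet only in $0$ by a dimension count.

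First I would partition the basis $\mP$ relative to a fixed point $x$ into three classes: the singleton $\{x\}$; the set of points $y \neq x$ with $y \nsim x$; and the set of points collinear with $x$, which organises itself into pairs $\{y, z\}$ where $z = x \wedge y$ (and $y = x \wedge z$), one pair per line of $\mL$ through $x$. Using the multiplication rule directly I would check: $x \cdot x = x$; $x \cdot y = 0$ when $y \nsim x$ and $y \neq x$; and for a collinear pair $\{y,z\}$ with $z = x \wedge y$,
\[
x \cdot y = \tfrac{\alpha}{2}(x + y - z), \qquad x \cdot z = \tfrac{\alpha}{2}(x + z - y).
\]
From these two formulas I get $x \cdot (y - z) = \alpha (y - z)$ and $x \cdot (y + z - \alpha x) = \alpha x - \alpha x = 0$, which shows that the vectors listed in the proposition are honest eigenvectors with the claimed eigenvalues.

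Next I would show that the three eigenspaces together span $M_\alpha(\mG)$ by exhibiting each basis point as a combination of elements already accounted for. The point $x$ itself is the generator of the $1$-eigenspace. A point $y$ with $y \nsim x$ lies directly in the $0$-eigenspace. For a collinear pair $\{y, z\}$ with $z = x \wedge y$, the identities
\[
y = \tfrac{1}{2}(y - z) + \tfrac{1}{2}(y + z - \alpha x) + \tfrac{\alpha}{2} x, \qquad
z = -\tfrac{1}{2}(y - z) + \tfrac{1}{2}(y + z - \alpha x) + \tfrac{\alpha}{2} x
\]
express $y$ and $z$ as sums of an $\alpha$-eigenvector, a $0$-eigenvector, and a $1$-eigenvector. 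Hence the sum of the three eigenspaces is all of $M_\alpha(\mG)$.

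Finally I would argue directness by counting dimensions. Since $\alpha \notin \{0,1\}$, eigenvectors attached to distinct eigenvalues are automatically linearly independent, so only the internal structure of each eigenspace requires thought. Letting $k$ denote the number of lines of $\mL$ through $x$, the generators $y - x \wedge y$ for $y \sim x$ are indexed by collinear points but satisfy $(x \wedge y) - x \wedge (x \wedge y) = -(y - x \wedge y)$, so they yield exactly $k$ independent vectors in the $\alpha$-eigenspace; similarly the generators $y + x \wedge y - \alpha x$ yield $k$ independent vectors, and the subspace $\langle y \mid y \nsim x\rangle$ has dimension $|\mP| - 1 - 2k$ and is clearly disjoint from $\langle y + x \wedge y - \alpha x \mid y \sim x\rangle$ in basis support (the latter involves only $x$ and points collinear with $x$). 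Adding up, the three eigenspaces contribute $1 + k + (k + |\mP| - 1 - 2k) = |\mP|$ independent vectors, matching $\dim M_\alpha(\mG)$, so the spanning sum must in fact be a direct sum decomposition. The whole argument is essentially routine; the only mildly delicate point is keeping track of the indexing of eigenvectors by \emph{lines} rather than by collinear points when producing a basis.
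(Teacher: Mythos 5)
Your verification is correct: the eigenvector computations ($x\cdot(y-x\wedge y)=\alpha(y-x\wedge y)$, $x\cdot(y+x\wedge y-\alpha x)=0$, and so on), the spanning identities expressing $y$ and $x\wedge y$ in terms of the listed eigenvectors, and the directness argument all check out, with $\Char(k)\neq 2$ and $\alpha\notin\{0,1\}$ used exactly where needed. Note that the paper itself gives no proof of this proposition --- it is quoted from \cite[Theorem~6.2]{HRS15b} --- so there is no internal argument to compare against; your direct computation is the natural one. One small remark: once you know the three subspaces consist of eigenvectors for the distinct eigenvalues $1,0,\alpha$ and that they span, the sum is automatically direct and each subspace is forced to be the full eigenspace (since eigenspaces for distinct eigenvalues of a single operator intersect trivially), so the dimension count via the number of lines through $x$ is a correct but dispensable extra step.
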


These decompositions satisfy the Jordan fusion rule $\Phi(\alpha)$ precisely when $\mG$ is a Fischer space:
\begin{prop}[{\cite[Theorem 6.5]{HRS15b}}] \label{prop:matsuofischer}
    Let $\mG = (\mP, \mL)$ be a partial triple system, with corresponding Matsuo algebra $A = M_\alpha(\mG)$.
    Then the Matsuo algebra $(A ,\mP)$ is a $\Phi(\alpha)$\dash axial algebra if and only if $\mG$ is a Fischer space.
\end{prop}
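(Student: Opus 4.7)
The plan is to combine Proposition~\ref{prop:matsuo-eigenspaces}, which already supplies the direct sum decomposition $A = A_1^x \oplus A_0^x \oplus A_\alpha^x$ for every $x \in \mP$ whenever $\mG$ is a partial triple system, with a local analysis of the fusion rule inside the subspace generated by two intersecting lines through a common axis $x$. Since the eigenspace decomposition is automatic, the statement reduces to showing that the Jordan fusion rule $\Phi(\alpha)$ holds for the product of eigenvectors at each $x \in \mP$ if and only if every such ``two-line subspace'' is either the dual affine plane of order~$2$ or the affine plane of order~$3$.

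First I would dispatch the easy fusion rule inclusions. The products $A_1^x \cdot A_0^x = 0$ and $A_1^x \cdot A_\alpha^x \subseteq A_\alpha^x$ follow in one line each from the Matsuo multiplication, using $xz = 0$ for $z \nsim x$ and $x(y - x\wedge y) = \alpha(y - x\wedge y)$ for $y \sim x$. Next, by bilinearity the remaining inclusions $A_0^x \cdot A_0^x \subseteq A_0^x$, $A_0^x \cdot A_\alpha^x \subseteq A_\alpha^x$ and $A_\alpha^x \cdot A_\alpha^x \subseteq A_1^x \oplus A_0^x$ reduce to products between the generating vectors of the $0$- and $\alpha$-eigenspaces listed in Proposition~\ref{prop:matsuo-eigenspaces}. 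Those involving a basis point $z \nsim x$ are easy (they either vanish, reproduce $z$, or land on a line entirely disjoint from $x$), and the interesting products are those of the form $(y \pm x\wedge y)(z \pm x \wedge z)$ with $y, z \sim x$ on two distinct lines through $x$. Such a product only involves the four ``outer'' points $y, x\wedge y, z, x\wedge z$ together with $x$, so the problem becomes a local one about the subspace of $\mG$ generated by the two lines $\{x,y,x\wedge y\}$ and $\{x,z,x\wedge z\}$.

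The technical heart of the argument is the expansion of these products via the Matsuo rule. Each cross-term $y_i z_j$ contributes either $0$, if $y_i \nsim z_j$, or $\tfrac{\alpha}{2}(y_i + z_j - y_i \wedge z_j)$, if they are collinear; so the net product lands in the prescribed eigenspace exactly when the wedge points $y_i \wedge z_j$ and the resulting coefficients assemble into a combination supported on $\{x, y, x\wedge y, z, x\wedge z\}$ in the right eigenspace pattern. I would organize the verification as a case distinction on the number of collinear cross-pairs and the coincidences among their wedges, and show that cancellation occurs in exactly two cases: the configuration with two collinear cross-pairs sharing a common third point (yielding the dual affine plane of order~$2$), and the fully collinear configuration whose wedges fit together to form the remaining three parallel lines of the affine plane of order~$3$. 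In any other partial triple system configuration some coefficient of a stray basis point fails to cancel, producing a summand outside the prescribed eigenspace and hence a violation of the fusion rule. The main obstacle is this bookkeeping: care is needed with the degenerate sub-cases where two wedges coincide or where a wedge equals $x$, $y$, $x\wedge y$, $z$ or $x\wedge z$, since it is precisely these coincidences that force the local configuration to close up into one of the two admissible Fischer configurations, and conversely their absence is exactly what creates the stray terms in non-Fischer configurations.
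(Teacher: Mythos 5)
The paper itself offers no proof of this proposition: it is imported verbatim from \cite[Theorem~6.5]{HRS15b}, so your proposal can only be measured against the standard argument of that reference. Your outline does follow that argument in its broad strokes: the eigenspace decomposition from Proposition~\ref{prop:matsuo-eigenspaces} is available for any partial triple system, so everything hinges on the fusion rule, and both implications reduce to an analysis of the subspace generated by two intersecting lines, with the dual affine plane of order~$2$ and the affine plane of order~$3$ emerging as exactly the configurations in which the stray wedge points cancel.

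There is, however, one step that fails as stated. You claim that the products involving a basis point $z \nsim x$ are easy because they ``either vanish, reproduce $z$, or land on a line entirely disjoint from $x$'', and on that basis you exclude them from the technical heart of the argument. That trichotomy is false for the mixed products. Take the dual affine plane of order~$2$ with lines $\{1,2,3\}$, $\{1,4,5\}$, $\{2,4,6\}$, $\{3,5,6\}$, and set $x=2$, $y=1$ (so $x\wedge y=3$) and $z=5$; then $z\nsim x$ but $z\sim y$, and $yz=\tfrac{\alpha}{2}(1+5-4)$ involves the point $4=y\wedge z$, which \emph{is} collinear with $x$. The product $(y-x\wedge y)\cdot z\in A^x_\alpha\cdot A^x_0$ works out to $\tfrac{\alpha}{2}\bigl((1-3)-(4-6)\bigr)$, which does lie in $A^x_\alpha$ as the fusion rule demands, but only because the wedges $y\wedge z$ and $(x\wedge y)\wedge z$ pair up correctly inside the ambient plane --- a fact that again rests on the Fischer axiom, here applied to two lines meeting at $y$ rather than at $x$. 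Similarly, $zz'$ with $z,z'\nsim x$ and $z\sim z'$ lands in $A^x_0$ only because $z\wedge z'\nsim x$, which is itself a consequence of the Fischer axiom and not of the partial-triple-system hypothesis alone. So these cases must be folded into your case analysis (they contribute both to the ``if'' direction and to identifying which configurations violate the fusion rule) rather than dismissed as automatic; once that is done, the remaining bookkeeping you describe is indeed the standard proof.
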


Since the Jordan fusion rule $\Phi(\alpha)$ is $\mathbb{Z}/2\mathbb{Z}$\dash graded with $\Phi(\alpha)_+ = \{1,0\}$ and $\Phi(\alpha)_- = \{\alpha\}$,
we can consider the Miyamoto involutions $\tau_x \in \aut(M_\alpha(\mG))$ for each $x \in \mP$.
Isolated points of the Fischer space should be treated with some care;
they do not pose any serious difficulties, however, since they would give rise to trivial Miyamoto involutions.
For simplicity, we nevertheless exclude this situation.
\begin{prop}[{\cite[Theorem 6.4]{HRS15b}}] \label{prop:matsuo-miy}
    Let $\mG = (\mP, \mL)$ be a Fischer space without isolated points, with corresponding Matsuo algebra $A = M_\alpha(\mG)$.
    Then:
	\begin{enumerate}[\rm(i)]
        \item\label{it:miy1}
            The pair $(A, \mP)$ is an axial representation of the 3\dash transposition group $f(\mG)$.
        \item\label{it:miy2}
            For each $x \in \mP$, the Miyamoto involution $\tau_x \in \aut(A)$ acts on $\mP$ as
            the automorphism $\tau_x$ introduced in Definition~\textup{\ref{def:3t-Fs}\ref{it:tau-Fs}}.
        \item\label{it:matsuoofuniquetype}
            The axial representation $(A, \mP)$ is of unique type.
    \end{enumerate}
\end{prop}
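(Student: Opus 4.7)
The plan is to prove the three parts in the order (ii), (i), (iii), since both (i) and (iii) rely on the explicit description of the action of $\tau_x$ on $\mP$ given by (ii).

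For part (ii), I would apply Proposition~\ref{prop:matsuo-eigenspaces} to decompose each $y \in \mP$ into eigenvectors of $x$. If $y = x$ or $y \nsim x$, then $y$ lies in the $+1$-eigenspace $A_1^x \oplus A_0^x$ of $\tau_x$ and is fixed, matching $\tau_x(y) = y$ in Definition~\ref{def:3t-Fs}\ref{it:tau-Fs}. If $y \sim x$, write $z = x \wedge y$; a direct computation with the generators listed in Proposition~\ref{prop:matsuo-eigenspaces} yields the eigenspace decomposition
\[
    y = \tfrac{\alpha}{2}\, x \;+\; \tfrac{1}{2}(y + z - \alpha x) \;+\; \tfrac{1}{2}(y - z),
\]
whose summands lie in $A_1^x$, $A_0^x$, and $A_\alpha^x$ respectively. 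Applying $\tau_x$, which negates only the last summand, collapses the expression to $z$, exactly as required.

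For part (i), the $(\Phi(\alpha),\star)$-axial structure with a $\mathbb{Z}/2\mathbb{Z}$-graded fusion rule is supplied by Proposition~\ref{prop:matsuofischer}. From (ii), each $\tau_x$ permutes $\mP$, so $\mP$ is Miyamoto-closed. Moreover, every element of $\Miy(A, \mP)$ is a linear automorphism of $A$ and hence is determined by its restriction to the basis $\mP$; the resulting restriction map $r \colon \Miy(A, \mP) \to \aut(\mG)$ is then an injective group homomorphism whose image contains each of the involutions from Definition~\ref{def:3t-Fs}\ref{it:tau-Fs} (again by (ii)), and so coincides with $G$. Inverting $r$ yields the desired isomorphism $\xi \colon G \to \Miy(A, \mP)$, which by construction sends the generating set $D$ (no element of which is trivial, by the no-isolated-points hypothesis) onto $\{\tau_e \mid e \in \mP\}$, completing the verification against Definition~\ref{def:axialrepresentation}.

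For part (iii), only injectivity of $\tau \colon \mP \to D : e \mapsto \tau_e$ requires work. By (ii), it suffices to show that distinct points of $\mP$ induce distinct permutations of $\mP$. Fix $x \neq y$ in $\mP$. If $x \sim y$ then $\tau_x(y) = x \wedge y \neq y = \tau_y(y)$. Otherwise $x \nsim y$, and using the no-isolated-points hypothesis we pick some $z \in \mP$ with $z \sim x$. If $z \nsim y$, then $\tau_x(z) = x \wedge z \neq z = \tau_y(z)$. The remaining case is $z \sim y$, in which I claim $x \wedge z \neq y \wedge z$: were they equal, the two lines $\{x, z, x \wedge z\}$ and $\{y, z, y \wedge z\}$ would share both $z$ and $x \wedge z$, hence coincide by the partial-triple-system axiom, forcing $y \in \{x, x \wedge z\}$. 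Since $y \neq x$ and $y \neq x \wedge z$ (otherwise $x$, $y$, $z$ would be collinear, contradicting $x \nsim y$), this is a contradiction, so $\tau_x$ and $\tau_y$ differ at $z$.

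The main obstacle, in my estimation, is the last case of (iii): distinct non-collinear points could a priori still induce the same permutation of $\mP$, and ruling this out requires one to invoke both the partial-triple-system axiom and the absence of isolated points. The other verifications are essentially routine unpacking of the definitions, guided by the eigenspace computation in (ii).
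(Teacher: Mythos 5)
Your proposal is correct. Note, though, that the paper does not actually prove parts (i) and (ii): it imports them wholesale from the cited reference \cite[Theorem~6.4]{HRS15b}, and only part (iii) is argued in the text. Your direct verification of (ii) via the decomposition $y = \tfrac{\alpha}{2}x + \tfrac{1}{2}(y + x\wedge y - \alpha x) + \tfrac{1}{2}(y - x\wedge y)$ into the eigenspaces of Proposition~\ref{prop:matsuo-eigenspaces} is a valid (and self-contained) substitute for that citation, as is your argument that restriction to the basis $\mP$ gives an injective homomorphism $\Miy(A,\mP)\to\aut(\mG)$ with image $f(\mG)$. For (iii), the paper and you use the same key idea --- pick $z\sim x$ and exploit the uniqueness of the third point on the line through $z$ and $x\wedge z$ --- but the paper's formulation is slightly slicker: assuming $\tau_x=\tau_y$ directly, the equality $z^{\tau_y}=z^{\tau_x}=x\wedge z\neq z$ immediately forces $z\sim y$ and $y\wedge z=x\wedge z$, so the subcases $z\nsim y$ and $x\sim y$ that you treat separately never arise. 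Your extra case analysis is harmless and correct, just not needed.
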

\begin{proof}
    Statements~\ref{it:miy1} and~\ref{it:miy2} follow from~\cite[Theorem 6.4]{HRS15b}.
    We now show~\ref{it:matsuoofuniquetype}.
    So let $D = \{ \tau_x \mid x \in \mP\}\subseteq \aut(A)$;
	we have to show that the map $\tau : \mP \to D : x \mapsto \tau_x$ is injective.
    Suppose $\tau_x = \tau_y$ for some $x,y \in \mP$; then by~\ref{it:miy2}, $\tau_x$ and $\tau_y$ induce the same automorphism of the Fischer space $\mG$.
    Since $\mG$ has no isolated points, there exists some point $z \in \mP$ such that $z \sim x$.
    Then $z^{\tau_y} = z^{\tau_x} = x \wedge z$ and therefore both $x$ and $y$ are the third point on the line through $z$ and $x \wedge z$.
\end{proof}

\begin{rem}
    Let $\mG = (\mP, \mL)$ be a Fischer space without isolated points, with corresponding Matsuo algebra $A = M_\alpha(\mG)$.
    By Proposition~\ref{prop:matsuo-miy}\ref{it:miy2}, we can view the Miyamoto group $\Miy(A, \mP) = \langle \tau_x \mid x \in \mP \rangle$
    as a subgroup of either $\aut(A)$ or $\aut(\mG)$, whichever is the most convenient.
\end{rem}

We now start from a $3$-transposition group $(G,D)$ and we would like to construct an axial representation for $(G,D)$.
Again, the situation giving rise to isolated points of the corresponding Fischer space should be treated with some care,
and this works out nicely when we assume that $G$ is centerless.

\begin{prop} \label{prop:Bue2}
	Let $(G,D)$ be a 3\dash transposition group with $Z(G)=1$.
    Then $(G,D)$ has an axial representation $(M_\alpha(\mG),\mP)$ for the Fischer space $\mG = g(G, D)$.
\end{prop}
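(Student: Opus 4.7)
The plan is to assemble this directly from Proposition~\ref{prop:matsuo-miy} and Proposition~\ref{prop:Bue}; the only nontrivial check is that the centerless hypothesis exactly rules out the troublesome isolated-point situation.

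First I would observe that by Proposition~\ref{prop:Bue}(i), the point-line geometry $\mG = g(G,D)$ is a Fischer space, so its Matsuo algebra $A = M_\alpha(\mG)$ is defined and, by Proposition~\ref{prop:matsuofischer}, is a $\Phi(\alpha)$-axial algebra with axes $\mP = D$.

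Next I would verify that $\mG$ has no isolated points, since this is the hypothesis required to invoke Proposition~\ref{prop:matsuo-miy}. By the definition of the lines of $g(G,D)$, a point $d \in D$ is isolated precisely when $\order(cd) \neq 3$ for every $c \in D \setminus \{d\}$, i.e.\ when $d$ commutes with every element of $D$. Since $D$ generates $G$, this forces $d \in \Z(G)$, and the assumption $\Z(G) = 1$ together with the fact that every element of $D$ is an involution yields a contradiction. Hence $\mG$ has no isolated points.

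Now Proposition~\ref{prop:matsuo-miy}\ref{it:miy1} applies and gives that $(A, \mP)$ is an axial representation of $f(\mG) = f(g(G,D))$. By Proposition~\ref{prop:Bue}\ref{it:fg}, there is an isomorphism of $3$-transposition groups $f(g(G,D)) \cong (G,D)^\circ$, and since $\Z(G) = 1$ we have $(G,D)^\circ = (G,D)$. Composing this isomorphism with the axial representation isomorphism $\xi \colon f(\mG) \to \Miy(A,\mP)$ provided by Proposition~\ref{prop:matsuo-miy} yields an isomorphism $G \to \Miy(A,\mP)$ sending $D$ onto $\{\tau_e \mid e \in \mP\}$, and $\mP$ is Miyamoto-closed by Proposition~\ref{prop:matsuo-miy}\ref{it:miy2} combined with Definition~\ref{def:3t-Fs}\ref{it:tau-Fs}. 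This is exactly an axial representation of $(G,D)$ in the sense of Definition~\ref{def:axialrepresentation}, completing the proof. No step presents a real obstacle; the only conceptual point is the equivalence ``isolated point of $\mG$ $\iff$ central involution of $G$'', which is what makes the hypothesis $\Z(G) = 1$ the right assumption.
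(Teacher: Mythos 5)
Your proof is correct and follows essentially the same route as the paper, which simply notes that $(G,D)=(G,D)^\circ$ when $G$ is centerless and then cites Proposition~\ref{prop:Bue}\ref{it:fg} and Proposition~\ref{prop:matsuo-miy}\ref{it:miy1}. You additionally spell out the verification that $g(G,D)$ has no isolated points (an isolated point of $g(G,D)$ would be a central involution of $G$), a detail the paper's two-line proof leaves implicit but which is indeed needed to invoke Proposition~\ref{prop:matsuo-miy}.
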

\begin{proof}
	Since $G$ is centerless, $(G,D) = (G,D)^\circ$. The statement now follows from Proposition~\ref{prop:Bue}\ref{it:fg}
    together with Proposition~\ref{prop:matsuo-miy}\ref{it:miy1}.
\end{proof}

The following theorem, which is our main result in this section, shows that the group $\U(M_\alpha(\mG),\mP)$
can be interpreted as a universal 3\dash transposition group for the given Fischer space $\mG$.
Recall the definition of $\U(A)$ from Definition~\ref{def:U} above.

\begin{thm} \label{thm:universal3trans}
	Let $\mG=(\mP,\mL)$ be a Fischer space without isolated points.
    Consider the axial representation $\mA = (M_\alpha(\mG),\mP)$ of the 3\dash transposition group $(G,D) \coloneqq f(\mG)$.
    Then:
	\begin{enumerate}[\rm(i)]
		\item $\bigl( \U(\mA),\{ t_x \mid x \in \mP \} \bigr)$ is a 3\dash transposition group and $g\bigl( \U(\mA),\{ t_x \mid x \in \mP \} \bigr) \cong \mG$.
		\item Let $(G^\prime,D^\prime)$ be a 3\dash transposition group such that $g(G^\prime,D^\prime) \cong \mG$. Let $\varphi : \mP \to D^\prime$ be an isomorphism between $\mG$ and $g(G^\prime,D^\prime)$. The map defined by
		\[
			\theta : \U(\mA) \to G^\prime : t_x \mapsto \varphi(x)
		\]
		is a group epimorphism and $G'/\Z(G') \cong \U(\mA)/\Z(\U(\mA)) \cong G$.
	\end{enumerate}
\end{thm}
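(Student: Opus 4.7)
For part (i), my plan is to verify directly the three defining conditions of a 3\dash transposition group for $\{t_x \mid x \in \mP\}$ in $\U(\mA)$. Non-triviality of each $t_x$ follows from the epimorphism $\tau \colon \U(\mA) \to G$ of Proposition~\ref{prop:UA-prop}\ref{prop:UA-prop:epi}, which sends $t_x \mapsto \tau_x \neq 1$ (non-triviality of $\tau_x$ holds since $\mG$ has no isolated points). Closure under conjugation is immediate from the defining relation $(t_x)^{t_y} = t_{x^{\tau_y}}$ together with Miyamoto-closedness of $\mP$. For the bound on the order of a product I would case-split: if $x \nsim y$ and $x\neq y$, then $\tau_y$ fixes $x$, forcing $t_x$ and $t_y$ to commute; if $x \sim y$ with $z = x \wedge y = x^{\tau_y} = y^{\tau_x}$, then
\[
(t_x t_y)^3 = (t_x t_y t_x)(t_y t_x t_y) = t_{y^{\tau_x}}\, t_{x^{\tau_y}} = t_z t_z = 1.
\]
To identify $g(\U(\mA), \{t_x\})$ with $\mG$, I would observe that $x \mapsto t_x$ is injective: since $\tau(t_x) = \tau_x$ and the Matsuo representation is of unique type by Proposition~\ref{prop:matsuo-miy}\ref{it:matsuoofuniquetype}, $t_x = t_y$ forces $\tau_x = \tau_y$ and hence $x = y$. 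The same case analysis shows that lines $\{t_x, t_y, (t_x)^{t_y}\}$ of $g(\U(\mA), \{t_x\})$ correspond bijectively to lines $\{x, y, x \wedge y\}$ of $\mG$.

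For part (ii), the crucial step is checking that the defining relations of $\U(\mA)$ hold for the elements $\varphi(x) \in G'$, which would give $\theta$ as a well-defined homomorphism. The involution relations hold because $\varphi(x) \in D'$, so the content is the conjugation relation $\varphi(x)^{\varphi(y)} = \varphi(x^{\tau_y})$. I would derive this by translating ``$\varphi$ is an isomorphism of Fischer spaces $\mG \to g(G',D')$'' into an algebraic identity: when $x \sim y$, the line $\{x, y, x \wedge y\}$ is mapped to a line in $g(G', D')$, and by the very definition of $g$ the third point of that line is $\varphi(x)^{\varphi(y)}$, forcing it to equal $\varphi(x \wedge y) = \varphi(x^{\tau_y})$; when $x \nsim y$, the images are non-collinear in $g(G',D')$, hence commute in $G'$, matching $\varphi(x^{\tau_y}) = \varphi(x)$. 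Surjectivity of $\theta$ is then immediate because its image contains $\varphi(\mP) = D'$, which generates $G'$.

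Finally, to identify the quotients, I would assemble standard results: Proposition~\ref{prop:Bue}\ref{it:fg} applied to $(G', D')$ gives $f(g(G', D')) \cong (G', D')^\circ$, and combined with the hypothesis $g(G', D') \cong \mG$ and the definition $f(\mG) = (G, D)$ this yields $G'/\Z(G') \cong G$. In parallel, Proposition~\ref{prop:ofuniquetype}\ref{prop:ofuniquetype:kernel} together with Proposition~\ref{prop:matsuo-miy}\ref{it:matsuoofuniquetype} identifies $\ker \tau$ with $\Z(\U(\mA))$, so $\U(\mA)/\Z(\U(\mA)) \cong G$ as well. The step I expect to be the main obstacle is the passage from the purely combinatorial Fischer-space isomorphism $\varphi$ to the algebraic conjugation identity underlying well-definedness of $\theta$; once this translation is in hand, the remaining verifications are direct applications of propositions already established in the paper.
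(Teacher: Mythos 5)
Your proposal is correct and follows essentially the same route as the paper's proof: the same collinearity case analysis establishes the 3\dash transposition property and the identification of lines, the same unique-type argument (via Proposition~\ref{prop:matsuo-miy}\ref{it:matsuoofuniquetype}) gives injectivity of $x \mapsto t_x$, and the same translation of the Fischer-space isomorphism $\varphi$ into the conjugation relation $\varphi(x)^{\varphi(y)} = \varphi(x^{\tau_y})$ shows that $\theta$ is a well-defined epimorphism. The only divergence is in the final identification of the quotients: the paper introduces an auxiliary map $\theta' \colon G' \to G \colon \varphi(x) \mapsto \tau_x$ and computes $\ker(\theta' \circ \theta) = \Z(\U(\mA))$ to deduce $\ker(\theta') = \Z(G')$, whereas you obtain $G'/\Z(G') \cong G$ directly from Buekenhout's correspondence, Proposition~\ref{prop:Bue}\ref{it:fg}, applied to $(G',D')$. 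Your route is slightly cleaner in that it avoids having to justify that $\theta'$ is a well-defined homomorphism (which the paper passes over quickly), while the paper's route has the small advantage of exhibiting the quotient map explicitly as a factorization of $\tau = \theta' \circ \theta$; both rest on Proposition~\ref{prop:ofuniquetype}\ref{prop:ofuniquetype:kernel} for $\U(\mA)/\Z(\U(\mA)) \cong G$.
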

\begin{proof}
	\begin{enumerate}[(i)]
		\item
            Since all relations that hold in $\U(\mA)$ have even length and all $t_x$ have order at most $2$, all $t_x$ have order exactly $2$.
            Therefore $\{ t_x \mid x \in \mP \}$ is, by definition of $\U(\mA)$, a generating set of involutions invariant under conjugation.
            Since $\mG$ has no isolated points, the axial representation $\mA$ is of unique type and all $\tau_x$ are different.
            By Proposition \ref{prop:UA-prop}\ref{prop:UA-prop:epi}, also all $t_x$ must be different.

            Let $x,y \in \mP$; then, by Proposition \ref{prop:matsuo-miy}, either $x=y$ or $x^{\tau_y} = x$ if $x \nsim y$ or $x^{\tau_y} = x \wedge y$ if $x \sim y$.
            In the first case, $t_xt_y =1$. In the second case, $(t_x t_y)^2 = t_x t_{x^{\tau_y}} = 1$.
            In the third case, $y^{\tau_x} = x \wedge y$ and thus $(t_xt_y)^3=t_{x^{\tau_y}} t_{y^{\tau_x}} = (t_{x \wedge y})^2 = 1$.
            This proves that $(\U(\mA),\{ t_x \mid x \in \mP \})$ is indeed a 3\dash transposition group.

            Let $x,y,z \in \mP$; then $t_x$, $t_y$ and $t_z$ lie on a line in $g(\U(\mA),\{ t_x \mid x \in \mP \})$ if and only if $\order(t_x t_y) = 3$ and $(t_x)^{t_y} = t_z$.
            By our previous arguments, this happens if and only if $x \sim y$ and $z = x \wedge y$.
		\item
            For each $x \in \mP$, $\varphi(x) \in D^\prime$ is an involution.
            Let $x,y \in \mP$.
            If $x \sim y$, then, by definition of $g(G^\prime,D^\prime)$, $\varphi(x)^{\varphi(y)} = \varphi(x\wedge y) = \varphi(x^{\tau_y})$.
            If $x \nsim y$, then $\varphi(x)\varphi(y)$ has order 2 and $\varphi(x)^{\varphi(y)} = \varphi(x) = \varphi(x^{\tau_y})$.
            This proves that $\theta$ is a group epimorphism.

            Consider the map $\theta^\prime : G^\prime \to G$ defined by $\varphi(x) \mapsto \tau_x$.
            Then $\ker(\theta^\prime \circ \theta) = \Z(\U(\mA))$ by Proposition \ref{prop:ofuniquetype}\ref{prop:ofuniquetype:kernel}.
            Because $\theta$ is surjective, $\ker(\theta^\prime) \leq \Z(G^\prime)$ and, since $G$ is centerless, $\ker(\theta^\prime) = \Z(G^\prime)$.
            The isomorphism $G \cong \U(\mA)/\Z(\U(\mA))$ follows from Proposition \ref{prop:ofuniquetype}\ref{prop:ofuniquetype:kernel}
            or by applying the previous argument to the $3$\dash transposition group $(G^\prime,D^\prime) = \bigl( \U(\mA),\{t_x \mid x \in \mP\} \bigr)$.
        \qedhere
	\end{enumerate}
\end{proof}

\begin{example}
	Let $\mathcal{G}=(\mathcal{P},\mathcal{L})$ be the affine plane of order 3. Then $\U(M_\alpha(\mathcal{G}),\mathcal{P})$ is a semidirect product $3^2:S_3$ which is a central extension of the group $3^2:2$ from Example~\ref{ex:3trans}\ref{ex:3trans:AG23} by a cyclic group of order 3.
\end{example}

\section{Modules over Matsuo algebras} \label{sec:modulesovermatsuoalgebras}

Theorem \ref{thm:moduletorep} tells us that we can get a group representation out of a module over an axial algebra. For Matsuo algebras, the converse will also be true. In Theorem \ref{thm:reptomodule} we construct a module over a Matsuo algebra from such a group representation.

\begin{thm} \label{thm:reptomodule}
	Let $\mG=(\mP,\mL)$ be a Fischer space. Let $k$ be a field with $\Char(k) \neq 2$ and $\alpha \in k\setminus \{0,1\}$. Consider the Matsuo algebra $M_\alpha(\mG)$. Let $V$ be a $k$\dash vector space and $\rho: \U(M_\alpha(\mG),\mP) \to \GL(V)$ a group homomorphism. Since $\rho(t_x)^2 = 1$ for every $x \in \mP$ and $\Char(k) \neq 2$, we can decompose $V$ as a direct sum of the $1$\dash\ and $(-1)$\dash eigenspace of $\rho(t_x)$. The action defined by linearly extending
	\[
		v \cdot x = \begin{cases}
 			0 & \text{if } v^{\rho(t_x)} = v; \\
 			\alpha v & \text{if } v^{\rho(t_x)} = -v,
 		\end{cases}
	\]
	for each $x \in \mP$, equips $V$ with the structure of an $M_\alpha(\mG)$\dash module.
\end{thm}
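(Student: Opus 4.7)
The plan is to first rewrite the proposed action in a uniform way that makes the module axioms easy to check. For any $y \in \mP$ and $v \in V$, decomposing $v = v^+ + v^-$ into the $(\pm 1)$\dash eigenspaces of $\rho(t_y)$ gives $v - \rho(t_y)v = 2v^-$, so the definition of the action can be equivalently written as
\[
v \cdot y = \frac{\alpha}{2}\bigl(v - \rho(t_y)v\bigr),
\]
extended linearly to $M_\alpha(\mG)$. From this formula the eigenspaces of the $x$\dash action on $V$ are immediate: $V_0^x$ coincides with the $(+1)$\dash eigenspace of $\rho(t_x)$, $V_\alpha^x$ with the $(-1)$\dash eigenspace, and $V_1^x = 0$ since $\alpha \notin \{0,1\}$. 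Because $\Char(k) \neq 2$, the resulting decomposition $V = V_0^x \oplus V_\alpha^x$ is exactly the eigenspace decomposition already mentioned in the statement of the theorem, which settles condition~(i) of Definition~\ref{def:moduleoveraxialalgebra}.

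For condition~(ii), I would invoke Proposition~\ref{prop:matsuo-eigenspaces} to reduce the check to the explicit spanning vectors of $A_0^x$ and $A_\alpha^x$, namely $y$ with $y \nsim x$ and $y + z - \alpha x$ with $z = x \wedge y$, $y \sim x$, respectively $y - z$ with $y \sim x$. Since $V_1^x = 0$, only the four pairs $(\phi,\psi) \in \{0,\alpha\}^2$ demand attention, and each reduces to computing $\rho(t_x)(v \cdot a)$ on a spanning element $a$ and comparing it with $\pm(v \cdot a)$. The essential input throughout is the defining relation $(t_x)^{t_y} = t_{x^{\tau_y}}$ of $\U(M_\alpha(\mG),\mP)$, which under $\rho$ translates to
\[
\rho(t_x)\rho(t_y)\rho(t_x) = \rho(t_{x \wedge y}) \text{ when } x \sim y, \qquad \rho(t_x)\rho(t_y) = \rho(t_y)\rho(t_x) \text{ when } x \nsim y.
\]

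The easier cases are the ones with $y \nsim x$ (where $\rho(t_x)$ and $\rho(t_y)$ commute, so $\rho(t_x)$ automatically preserves each $\rho(t_y)$\dash eigenspace) and those with $a = y - z \in A_\alpha^x$, where one checks directly that $\rho(t_x)$ interchanges $\rho(t_y)v$ and $\rho(t_z)v$ up to the correct sign. The main obstacle, and the place where the Fischer-space geometry really enters, is the case $V_0^x \cdot A_0^x \subseteq V_0^x$ with $a = y + z - \alpha x$: here the individual summands $v \cdot y$ and $v \cdot z$ are generally \emph{not} fixed by $\rho(t_x)$, yet their sum is. Concretely, for $v \in V_0^x$ one has $v \cdot a = \tfrac{\alpha}{2}\bigl(2v - \rho(t_y)v - \rho(t_z)v\bigr)$, and applying $\rho(t_x)$, using $\rho(t_x)v = v$ together with $\rho(t_x)\rho(t_y)v = \rho(t_x)\rho(t_y)\rho(t_x)v = \rho(t_z)v$ and the symmetric identity with $y$ and $z$ swapped, shows that the two cross\dash terms $\rho(t_y)v$ and $\rho(t_z)v$ are merely exchanged, so the whole expression is fixed. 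This is the $\mathbb{Z}/3\mathbb{Z}$\dash symmetry among $t_x,t_y,t_z$ coming from the line $\{x,y,z\} \in \mL$ (cf.\ Theorem~\ref{thm:universal3trans}) doing exactly the work needed to match the Jordan fusion rule.
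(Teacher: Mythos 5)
Your proof is correct, but it is organized differently from the paper's. The paper first establishes the equivariance identity $(v \cdot a)^{\rho(t_x)} = v^{\rho(t_x)} \cdot a^{\tau_x}$ for all $a \in M_\alpha(\mG)$ (Lemma~\ref{lem:reptomodule}, proved by reducing to $a = y \in \mP$ and to $\rho(t_y)$\dash eigenvectors $v$), and then reads off the fusion rule abstractly: for $v$ in a $(\pm 1)$\dash eigenspace of $\rho(t_x)$ and $a \in A^x_{\Phi_+}$ or $A^x_{\Phi_-}$ one has $a^{\tau_x} = \pm a$, so the sign of $(v\cdot a)^{\rho(t_x)}$ is the product of the two signs and no explicit basis of $A^x_\phi$ is ever needed. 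You instead package the action into the uniform formula $v \cdot y = \tfrac{\alpha}{2}\bigl(v - \rho(t_y)v\bigr)$ --- a tidy observation the paper does not make explicit --- and verify the fusion rule by direct computation on the spanning vectors of Proposition~\ref{prop:matsuo-eigenspaces}, with the line relation $\rho(t_x)\rho(t_y)\rho(t_x) = \rho(t_{x\wedge y})$ doing the work in the collinear cases; your computations (commuting reflections for $y \nsim x$, the swap of $\rho(t_y)v$ and $\rho(t_z)v$ for $y \sim x$) all check out. Both arguments rest on exactly the same input, namely the defining relations of $\U(M_\alpha(\mG),\mP)$ pushed through $\rho$; yours is more computational but self-contained and makes the Fischer-space geometry visible, while the paper's lemma is stated once in a form that mirrors Lemma~\ref{lem:moduletorep}\ref{lem:moduletorep:i} and avoids case-by-case work on spanning sets. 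One small point of precision: $V_1^x = 0$ disposes of the pairs with $\phi = 1$, but the pairs with $\psi = 1$, i.e.\ $a \in A_1^x = \langle x \rangle$, still occur (the rules $0 \star 1 = \emptyset$ and $\alpha \star 1 = \{\alpha\}$ must be checked); they are immediate from $v \cdot x = 0$ resp.\ $v \cdot x = \alpha v$, but you should say so explicitly rather than fold them into ``only the four pairs in $\{0,\alpha\}^2$ demand attention.''
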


We start by mimicking Lemma \ref{lem:moduletorep}(i) with $\mu_x$ replaced by $\rho(t_x)$.

\begin{lem} \label{lem:reptomodule}
	For every $v \in V$, $a \in M_\alpha(\mG)$ and $x \in \mP$, $(v \cdot a)^{\rho(t_x)} = v^{\rho(t_x)} \cdot a^{\tau_x}$.
\end{lem}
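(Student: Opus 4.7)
The plan is to reduce to the case where $a$ is a basis point $y \in \mP$, and then to exploit the defining conjugation relation of $\U(M_\alpha(\mG),\mP)$ to get the identity directly, without any case analysis on the relative position of $x$ and $y$ in the Fischer space.

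First, by $k$\dash bilinearity of the action of $M_\alpha(\mG)$ on $V$ and by $k$\dash linearity of both $\rho(t_x)$ and $\tau_x$, it suffices to prove the identity $(v \cdot y)^{\rho(t_x)} = v^{\rho(t_x)} \cdot y^{\tau_x}$ for all $v \in V$ and all $x,y \in \mP$.

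The key observation is that, because $\rho$ is a homomorphism and the relation $(t_y)^{t_x} = t_{y^{\tau_x}}$ holds in $\U(M_\alpha(\mG),\mP)$ by Definition~\ref{def:U}, we have
\[
    \rho(t_{y^{\tau_x}}) = \rho(t_x)^{-1}\, \rho(t_y)\, \rho(t_x).
\]
A one-line verification from this formula shows that $\rho(t_x)$ sends the $\pm 1$\dash eigenspace of $\rho(t_y)$ bijectively onto the $\pm 1$\dash eigenspace of $\rho(t_{y^{\tau_x}})$. This eigenspace compatibility is the single ingredient that makes the proof uniform across the three possible positions ($y=x$, $y \sim x$, $y \nsim x$); in particular, there is no need to expand the dihedral relations for triples of collinear points.

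With this in hand, I would decompose $v = v_+ + v_-$ with $v_\varepsilon$ in the $\varepsilon$\dash eigenspace of $\rho(t_y)$. By the definition of the action, $v \cdot y = \alpha v_-$, so the left\dash hand side equals $\alpha\, v_-^{\rho(t_x)}$. By the observation above, $v^{\rho(t_x)} = v_+^{\rho(t_x)} + v_-^{\rho(t_x)}$ is precisely the decomposition of $v^{\rho(t_x)}$ into $\pm 1$\dash eigenspaces of $\rho(t_{y^{\tau_x}})$, so $v^{\rho(t_x)} \cdot y^{\tau_x} = \alpha\, v_-^{\rho(t_x)}$ as well. The main conceptual obstacle — what could have been a messy case analysis — is therefore dissolved by appealing to the group presentation of $\U(M_\alpha(\mG),\mP)$; the rest is a direct bookkeeping computation, exactly in the spirit of the proof of Lemma~\ref{lem:moduletorep}\ref{lem:moduletorep:i}.
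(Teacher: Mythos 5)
Your proof is correct and follows essentially the same route as the paper's: both reduce to $a = y \in \mP$ by bilinearity and then use the defining relation $(t_y)^{t_x} = t_{y^{\tau_x}}$ of $\U(M_\alpha(\mG),\mP)$ together with the homomorphism property of $\rho$ to see that $\rho(t_x)$ carries the $\pm 1$\dash eigenspaces of $\rho(t_y)$ onto those of $\rho(t_{y^{\tau_x}})$. The only cosmetic difference is that you handle both eigencomponents at once via $v = v_+ + v_-$, whereas the paper treats the two eigenvector cases separately; neither argument needs any case analysis on the relative position of $x$ and $y$ in the Fischer space.
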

\begin{proof}
	Since $M_\alpha(\mG)$ is spanned by the elements of $\mP$ and the action of $M_\alpha(\mG)$ is linear by definition, we may assume that $a = y \in \mP$. Since $V$ is decomposable into a $1$\dash\ and $(-1)$\dash eigenspace of $\rho(t_x)$, it suffices to consider the cases where $v$ is a $1$\dash\ or $(-1)$\dash eigenvector of $\rho(t_x)$. If $v^{\rho(t_y)} = v$ then $v \cdot y = 0$ and $(v \cdot y)^{\rho(t_x)} = 0$. Moreover
	\[
		\left(v^{\rho(t_x)}\right)^{\rho(t_{y^{\tau_x}})} = v^{\rho(t_x)\rho(t_x t_y t_x)} = v^{\rho(t_x)}
	\]
	and hence $v^{\rho(t_x)} \cdot y^{\tau_x} = 0$. In the second case, $v^{\rho(t_y)} = -v$ and therefore $v \cdot y = \alpha v$ and $(v \cdot y)^{\rho(t_x)} = \alpha v^{\rho(t_x)}$. Now
	\[
		\left( v^{\rho(t_x)} \right)^{\rho(t_{y^{\tau_x}})} = v^{\rho(t_x)\rho(t_x t_y t_x)} = -v^{\rho(t_x)}
	\]
	and thus $v^{\rho(t_x)} \cdot y^{\tau_x} = \alpha v^{\rho(t_x)}$. In both cases, $(v \cdot y)^{\rho(t_x)} = v^{\rho(t_x)} \cdot y^{\tau_x}$.
\end{proof}

\begin{proof}[Proof of Theorem \ref{thm:reptomodule}]
	Let $x \in \mP$. The vector space $V$ decomposes into a $1$\dash and $(-1)$\dash eigenspace of $\rho(t_x)$. By definition of the action of $x$, $V$ decomposes as $V = V_0^x \oplus V_\alpha^x$ where $V_0^x$ (resp. $V_\alpha^x$) is the $1$\dash eigenspace (resp. $(-1)$\dash eigenspace) of $\rho(t_x)$. It only remains to verify that the fusion rule is satisfied. Let $A = M_\alpha(\mG)$. For $v \in V_0^x$ and $a \in A^x_{\{1,0\}}$ (resp.\ $v \in V_\alpha^x$ and $a \in A^x_\alpha$) we have $v^{\rho(t_x)} = v$ (resp.\ $v^{\rho(t_x)} = -v$) and $a^{\tau_x} = a$ (resp.\ $a^{\tau_x} = -a$). By Lemma \ref{lem:reptomodule},
	\[
		(v \cdot a)^{\rho(t_x)} = v^{\rho(t_x)} \cdot a^{\tau_x} = v \cdot a
	\]
	and thus $v \cdot a$ is a $1$\dash eigenvector of $\rho(t_x)$. Therefore $v \cdot a$ belongs to $V_0^x$. If $v \in V_0^x$ and $a \in A^x_{\alpha}$ (resp. $v \in V_\alpha^x$ and $a \in A^x_{\{1,0\}}$), then $v^{\rho(t_x)} = v$ (resp. $v^{\rho(t_x)} = -v$) and $a^{\tau_x} = -a$ (resp. $a^{\tau_x}=a$). In both cases, by Lemma \ref{lem:reptomodule},
	\[
		(v \cdot a)^{\rho(t_x)} = v^{\rho(t_x)} \cdot a^{\tau_x} = -v \cdot a.
	\]
	Therefore $v \cdot a$ is a $1$\dash eigenvector of $\rho(t_x)$ and hence $v \cdot a \in V_\alpha^x$.
\end{proof}

\begin{cor} \label{cor:reptomodule}
	Let $\mG=(\mP,\mL)$ be a Fischer space and let $A=(M_\alpha(\mG),\mP)$ be its Matsuo algebra. There is a bijective correspondence between (isomorphism classes of) modules $M$ for $A$ such that $M_1^x = \{0\}$ for all $x \in \mP$ and (isomorphism classes of) linear representations of $\U(A)$.
\end{cor}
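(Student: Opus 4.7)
The plan is to exhibit the bijection explicitly by using the two constructions already provided: Theorem~\ref{thm:moduletorep} sends an $A$\dash module $M$ to the representation $\mu \colon \U(A) \to \GL(M) \colon t_e \mapsto \mu_e$, while Theorem~\ref{thm:reptomodule} sends a representation $\rho \colon \U(A) \to \GL(V)$ to an $A$\dash module structure on $V$. I would first note that the module produced by Theorem~\ref{thm:reptomodule} automatically satisfies $V_1^x = \{0\}$ for every $x \in \mP$, since by construction $x$ acts on $V$ with eigenvalues in $\{0,\alpha\}$ only. So both constructions land in the correct classes, and it remains to verify that they are mutually inverse.

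For the composition starting from a representation, I would begin with $\rho$, form the module $V$, and then form the induced representation $\mu$. For any $x \in \mP$, by the definition in Theorem~\ref{thm:reptomodule}, $V_0^x$ is exactly the $(+1)$\dash eigenspace of $\rho(t_x)$ and $V_\alpha^x$ is the $(-1)$\dash eigenspace; since $V_1^x = 0$, the Miyamoto involution $\mu_x$ acts as $+1$ on $V_0^x$ and as $-1$ on $V_\alpha^x$, which is precisely the action of $\rho(t_x)$. Hence $\mu$ and $\rho$ agree on the generators $t_x$ of $\U(A)$, so they coincide.

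For the other composition, I would start with a module $M$ satisfying $M_1^x = 0$, form $\mu$, and then form the module structure $\cdot'$ from $\mu$ via Theorem~\ref{thm:reptomodule}. For each $x \in \mP$ and $m \in M$, write $m = m_0 + m_\alpha$ along the decomposition $M = M_0^x \oplus M_\alpha^x$ (valid because $M_1^x=0$); then $\mu_x(m_0)=m_0$ and $\mu_x(m_\alpha)=-m_\alpha$, so by definition of $\cdot'$ one obtains $m \cdot' x = 0 \cdot m_0 + \alpha\, m_\alpha = m \cdot x$. Since the two actions agree on the spanning set $\mP$ of $A$ and are both $k$\dash bilinear, they coincide on $A$. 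A final short step is to check that the bijection respects isomorphisms: a $k$\dash linear map $M \to M'$ intertwines the actions of every $x \in \mP$ if and only if it intertwines the $(\pm 1)$\dash eigenspace decompositions of every $\mu_x$, i.e.\ if and only if it intertwines the corresponding $\U(A)$\dash representations.

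The main conceptual obstacle, and the reason for the hypothesis $M_1^x = \{0\}$, is precisely that the Miyamoto involution $\mu_x$ can only distinguish the $\Phi_+$\dash eigenspaces of $x$ from the $\Phi_-$\dash eigenspace, not the $1$\dash and $0$\dash eigenspaces from one another. If $M_1^x$ were allowed to be nonzero, the passage module~$\to$~representation would lose the information needed to recover the $A$\dash action of $x$ (since $\mu_x$ would not reveal whether a $\Phi_+$\dash eigenvector has eigenvalue $1$ or $0$), and the forward map would cease to be injective. Beyond this point, the argument is a straightforward unpacking of the two constructions; no delicate calculations beyond those already done in Theorems~\ref{thm:moduletorep} and~\ref{thm:reptomodule} are required.
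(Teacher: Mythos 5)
Your proposal is correct and follows essentially the same route as the paper: both use Theorem~\ref{thm:moduletorep} and Theorem~\ref{thm:reptomodule} as the two directions of the correspondence and verify, by matching the $(\pm 1)$\dash eigenspaces of $\rho(t_x)$ (equivalently $\mu_x$) with the $0$\dash and $\alpha$\dash eigenspaces of the module action of $x$, that they are mutually inverse on the stated classes. Your additional check that the correspondence respects isomorphisms, and your explanation of why the hypothesis $M_1^x=\{0\}$ is needed, are correct elaborations of points the paper treats more briefly.
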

\begin{proof}
    Using Theorem \ref{thm:reptomodule}, we can associate to each linear representation $\rho$ of $\U(A)$ an $A$-module $M_\rho$ with $(M_\rho)^x_1 = \{0\}$ for all $x$.
    We prove that the resulting map $\zeta \colon \rho \mapsto M_\rho$ is a bijection from the set of isomorphism classes of linear representations of $\U(A)$ to the set
    of isomorphism classes of $A$-modules with trivial $1$-eigenspaces.

    To show that $\zeta$ is onto, let $M$ be an arbitrary $A$-module with $M_1^x = \{0\}$ for all $x \in \mP$.
    By Theorem \ref{thm:moduletorep}, there is an associated linear representation $\rho$ of the group $\U(A)$;
    we claim that $M_\rho \cong M$.
    Indeed, since $\rho(t_x) = \mu_x$ and $M_1^x = \{0\}$,
    we see that $x \cdot m = 0$ (resp. $x \cdot m = \alpha m$) if and only if $m^{\rho(t_x)} = m$ (resp. $m^{\rho(t_x)} = -m$) for all $m \in M$ and $x \in \mP$. By definition of $M_\rho$, this proves the claim.

    On the other hand, $\rho$ is uniquely determined by its action on the eigenspaces of $M_\rho$, and hence $\zeta$ is also injective.
\end{proof}

More generally, given any module $M$ over the Matsuo algebra $M_\alpha(\mG)$, Theorem~\ref{thm:moduletorep} gives us a representation $\rho$ of the group $\U(M_\alpha(\mG),\mP)$. The module constructed by Theorem \ref{thm:reptomodule} out of $\rho$ resembles $M$, with the important difference that all $1$\dash eigenvectors of an axis $x \in \mP$ have become $0$\dash eigenvectors. (The action on the $0$\dash\ and $\alpha$\dash eigenvectors remains unchanged.) This leaves of course the question what the role of a $1$\dash eigenvector is inside a module. Since a regular module, i.e., the axial algebra as a module over itself, contains non-trivial 1\dash eigenspaces, we definitely want to allow the existence of $1$\dash eigenspaces in Definition \ref{def:moduleoveraxialalgebra}. We will now prove that, under certain conditions, this is the only way a $1$\dash eigenspace can turn up in a module over a Matsuo algebra.

From now on, we would like to restrict to Matsuo algebras over connected Fischer spaces without isolated points.
This is not a serious restriction, as the following proposition allows to generalize results to arbitrary Fischer spaces without isolated points
by looking at their connected components.

\begin{prop}[{\cite[Theorem 6.2]{HRS15b}}]
	Let $\mG$ be a Fischer space and let $\{\mG_i \mid i \in I\}$ be the set of its connected components. Then $M_\alpha(\mG_i) \cong \bigoplus_{i \in I} M_\alpha(\mG_i)$.
\end{prop}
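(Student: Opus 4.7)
\medskip

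\noindent\textbf{Proof proposal.} The plan is to exhibit the isomorphism directly from the basis decomposition coming from the partition of $\mP$ into the point sets of the connected components, and then verify that the multiplication of $M_\alpha(\mG)$ respects this decomposition. Write $\mG_i = (\mP_i, \mL_i)$ for each $i \in I$. Since connectivity is an equivalence relation on $\mP$, the point sets $\mP_i$ form a partition of $\mP$. Hence, as a $k$\dash vector space,
\[
    M_\alpha(\mG) = \bigoplus_{i \in I} V_i, \qquad V_i := \operatorname{span}_k(\mP_i),
\]
and the $k$\dash linear bijection $V_i \to M_\alpha(\mG_i)$ sending each basis element to itself is tautological.

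The substantive content is then that each $V_i$ is a subalgebra isomorphic to $M_\alpha(\mG_i)$ and that the summands annihilate each other. First I would check that lines of $\mG$ live inside connected components: if $\ell = \{x,y,z\} \in \mL$, then $x \sim y \sim z \sim x$, so all three points belong to the same component $\mP_i$, and in particular $\mL_i \subseteq \mL$ consists exactly of those lines of $\mG$ whose points lie in $\mP_i$. Using this, for $x \in \mP_i$ and $y \in \mP_j$ with $i \neq j$ one has $x \nsim y$ (otherwise they would be connected), so by the defining formula for the Matsuo product, $xy = 0$; this shows that $V_i \cdot V_j = 0$ when $i \neq j$.

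Next, for $x, y \in \mP_i$, I would compare the product $xy$ in $M_\alpha(\mG)$ with the product $xy$ in $M_\alpha(\mG_i)$ case by case according to the definition. The cases $x = y$ and $x \nsim y$ are immediate. In the remaining case $x \sim y$, the unique third point $x \wedge y$ on the unique line through $x$ and $y$ satisfies $x \sim x \wedge y$ (so $x \wedge y$ lies in the same connected component as $x$), and the defining line $\{x, y, x \wedge y\}$ belongs to $\mL_i$ by the preceding paragraph. Hence $x \wedge y$ is the same element whether one computes in $\mG$ or in $\mG_i$, so the products coincide and $V_i \cdot V_i \subseteq V_i$.

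Putting these observations together, the vector space isomorphism $\bigoplus_i V_i \to \bigoplus_i M_\alpha(\mG_i)$ is an algebra isomorphism, which yields the result. I do not expect any step to present a genuine obstacle: the proof is essentially a bookkeeping exercise, with the only nontrivial observation being that the ternary operation $\wedge$ on collinear pairs cannot leave a connected component, which is immediate from the definition of a connected component.
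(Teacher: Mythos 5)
Your argument is correct, and the paper itself gives no proof here --- it simply cites \cite[Theorem 6.2]{HRS15b} --- so your direct verification (points in distinct components are non-collinear, hence the corresponding spans annihilate each other, while each line and hence each $\wedge$-value stays inside its component) is exactly the standard argument one would write out. The only thing worth noting is that the statement as printed has a typo ($M_\alpha(\mG_i)$ on the left should be $M_\alpha(\mG)$), and your proof establishes the intended corrected version.
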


The connectedness of a Fischer space has the following implications on its Matsuo algebra.

\begin{lem} \label{lem:trans}
	Let $\mG=(\mP,\mL)$ be a Fischer space and let $M_\alpha(\mG)$ be its Matsuo algebra. Let $D = \{\tau_x \mid x \in \mP\}$ and $G = \langle D \rangle \leq \aut(M_\alpha(\mG))$. The following statements are equivalent.
	\begin{enumerate}[\rm(a)]
		\item The Fischer space $\mG$ is connected.
		\item The action of $G$ on $\mP$ is transitive.
		\item The set $D$ of Miyamoto involutions is a conjugacy class of $G$.
		\item The set $\{t_x \mid x \in \mP\}$ is a conjugacy class of $\U(M_\alpha(\mG),\mP)$.
	\end{enumerate}
\end{lem}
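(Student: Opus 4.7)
The plan is to prove three pairwise equivalences: (a) $\Leftrightarrow$ (b), (b) $\Leftrightarrow$ (c), and (c) $\Leftrightarrow$ (d). The central tool throughout is uniqueness of type for the axial representation $(M_\alpha(\mG), \mP)$, which Proposition~\ref{prop:matsuo-miy}\ref{it:matsuoofuniquetype} guarantees provided $\mG$ has no isolated points; if $\mG$ does have an isolated point $x$ then $\tau_x = 1$ lies in $D$ and all four statements reduce quickly to $\mP = \{x\}$, so one may safely restrict to the isolated-point-free case. Combined with Proposition~\ref{prop:tauconj}, which supplies $(\tau_x)^g = \tau_{x^g}$ for every $g \in G$, this yields a $G$-equivariant bijection $x \mapsto \tau_x$ between $\mP$ and $D$. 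The equivalence (b) $\Leftrightarrow$ (c) is then immediate, since the $G$-orbits on $\mP$ correspond exactly to the $G$-conjugacy classes inside $D$.

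For (a) $\Rightarrow$ (b), I would use that whenever $x \sim y$ the unique line through them is $\{x, y, x \wedge y\}$, so $\tau_{x \wedge y}$ swaps $x$ and $y$; given a collinearity chain $x = x_0 \sim x_1 \sim \cdots \sim x_n = y$, the product $\tau_{x_0 \wedge x_1} \cdots \tau_{x_{n-1} \wedge x_n} \in G$ maps $x$ to $y$. For the converse, each generator $\tau_z$ sends a point $x$ either to $x$ itself (if $x \nsim z$) or to $x \wedge z$ (if $x \sim z$), and in both cases the image lies in the same connected component as $x$; hence $G$ stabilises the partition of $\mP$ into connected components, and transitivity forces there to be only one component.

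For (c) $\Leftrightarrow$ (d), I would work with the group epimorphism $\tau \colon \U(M_\alpha(\mG), \mP) \to G$ from Proposition~\ref{prop:UA-prop}\ref{prop:UA-prop:epi}. The direction (d) $\Rightarrow$ (c) is automatic, as $\tau$ maps $\{t_x \mid x \in \mP\}$ onto $D$ and homomorphic images of single conjugacy classes are again single conjugacy classes. The converse is the main technical step: by induction on the word length of $u = t_{y_1} \cdots t_{y_n}$ and repeated application of the defining relation $(t_x)^{t_y} = t_{x^{\tau_y}}$ (which is meaningful precisely because $\mP$ is Miyamoto-closed), one obtains the lifted identity $(t_x)^u = t_{x^{\tau(u)}}$ for every $u \in \U(M_\alpha(\mG), \mP)$ and every $x \in \mP$. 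Given $x, y \in \mP$ and $g \in G$ with $(\tau_x)^g = \tau_y$, lifting $g$ to some $u \in \tau^{-1}(g)$ gives $(t_x)^u = t_{x^g}$; a second appeal to unique type, applied to $\tau_{x^g} = (\tau_x)^g = \tau_y$, yields $x^g = y$ and therefore $(t_x)^u = t_y$. This lifting identity, where the presentation of $\U(M_\alpha(\mG), \mP)$ combines with Miyamoto-closedness of $\mP$, is the one spot where the argument needs genuine care; the remainder is bookkeeping around the two uses of unique type.
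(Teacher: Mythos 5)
Your proof is correct and follows essentially the same route as the paper's: the collinearity-chain argument with the products $\tau_{x_{i}\wedge x_{i+1}}$ for (a)$\Leftrightarrow$(b), Proposition~\ref{prop:tauconj} together with unique type for (b)$\Leftrightarrow$(c), and the presentation of $\U(M_\alpha(\mG),\mP)$ for (c)$\Leftrightarrow$(d), where you usefully make explicit the lifting identity $(t_x)^u=t_{x^{\tau(u)}}$ that the paper leaves as ``by definition of the group''. One caveat on your isolated-point aside: if every point of $\mP$ is isolated and $\lvert\mP\rvert\geq 2$, then $D=\{1\}$ is a conjugacy class of the trivial group $G$ while (a), (b) and (d) all fail, so statement (c) does not reduce to $\mP=\{x\}$; this is really a gap in the lemma's hypotheses rather than in your main argument (the paper only ever invokes the lemma for Fischer spaces without isolated points), but the reduction as you state it is not quite accurate.
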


\begin{proof}
	Suppose the Fischer space $\mG$ is connected. Let $x,y \in \mP$. Then there exists a path $x,x_1,\dots,x_n,y$ from $x$ to $y$. Now
	\[
		x^{\tau_{x \wedge x_1} \tau_{x_1 \wedge x_2} \dotsm \tau_{x_n \wedge y}} = y
	\]
	and therefore $G$ acts transitively on $\mP$. Conversely, by Proposition \ref{prop:matsuo-miy}, all Miyamoto involutions stabilize the connected components of the Fischer space. Therefore, the Fischer space is connected when $G$ acts transitively on $\mP$. The equivalence between (b) and (c) follows from Proposition \ref{prop:tauconj}. By definition of the group $\U(M_\alpha(\mG),\mP)$, it follows that (c) and (d) are equivalent.
\end{proof}

From now on, let $\mG=(\mP,\mL)$ be a connected Fischer space without isolated points. Consider the axial representation $(M_\alpha(\mG),\mP)$ of the 3\dash transposition group $(G,D) \coloneqq f(\mG)$. Let $M$ be an $M_\alpha(\mG)$\dash module and suppose $M_1^x \neq \{0\}$ for some (and hence every) axis $x \in \mP$. We will prove in Theorem \ref{thm:1eigen1} that, under certain conditions, $M$ contains a submodule that is a quotient of $M_\alpha(\mG)$ as an $M_\alpha(\mG)$\dash module. First, we will construct the submodule. We introduce some notation.

\begin{defn} \label{def:1eigenspace}
    Let $M$ be an $M_\alpha(\mG)$\dash module and suppose $M_1^x \neq \{0\}$ for each $x \in \mP$.
	\begin{enumerate}[(i)]
		\item Let
			\[U \coloneqq \langle \mu_x \mid x \in \mP\rangle \leq \GL(M).\]
		Denote the group $\U(M_\alpha(\mG),\mP)$ by $T$ and consider, as in Proposition \ref{prop:UA-prop}\ref{prop:UA-prop:epi} and Theorem \ref{thm:moduletorep}, the group epimorphisms defined by
		\begin{align*}
			&\mu : T \to U : t_x \mapsto \mu_x, \\
			&\tau : T \to G : t_x \mapsto \tau_x.
		\end{align*}
		\item For each $x \in \mP$, we define a subgroup $\U_x = \mu(C_{T}(t_x))$ of $U$, where $C_{T}(t_x)$ is the centraliser of $t_x$ in $T$.
		\item Fix $x \in \mP$. Let $m \in M_1^x$ with $m \neq 0$. Let
		\[
			m_x = \sum_{\mu \in U_x} m^\mu.
		\]
		Because we assume that $\mG$ is connected, the group $T$ acts transitively on the set $\{t_x \mid x \in \mP\}$. Thus for every $y \in \mP$ there exists $t\in T$ such that $(t_x)^t = t_y$ and we define
		\begin{equation} \label{eq:defmy}
			m_y = (m_x)^{\mu(t)}.\tag{$\star$}
		\end{equation}
		By Lemma \ref{lem:1eigen1}\ref{lem:1eigen1:iv} below, this definition of $m_y$ is independent of the choice of $t$.
	\end{enumerate}
\end{defn}

\begin{lem} \label{lem:1eigen1} Let $y \in \mP$, $t \in T$ and $\phi \in \{1,0,\alpha\}$. Suppose $t^\prime \in T$ such that $(t_x)^t = (t_x)^{t^\prime}$. Then the following hold:
	\begin{enumerate}[\rm(i)]
		\item $(t_y)^t = t_{y^{\tau(t)}}$,
		\item \label{lem:1eigen1:ii} $(M_\phi^y)^{\mu(t)} = M_\phi^{y^{\tau(t)}}$,
		\item \label{lem:1eigen1:iii} $U_x\mu(t) = U_x\mu(t^\prime)$,
		\item \label{lem:1eigen1:iv} $(m_x)^{\mu(t)} = (m_x)^{\mu(t^\prime)}$,
		\item \label{lem:1eigen1:v} $m_y \in M_1^y$,
		\item \label{lem:1eigen1:vi} $(m_y)^{\mu(t)} = m_{y^{\tau(t)}}$.
	\end{enumerate}
\end{lem}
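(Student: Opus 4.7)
The plan is to prove the six items in sequence, as each one feeds into the next. The two essential ingredients are the defining relations of $T = \U(\mA)$ from Definition~\ref{def:U}, and the unique-type property for $(M_\alpha(\mG),\mP)$ established in Proposition~\ref{prop:matsuo-miy}\ref{it:matsuoofuniquetype}, which lets us translate equality of two Miyamoto involutions into equality of the corresponding axes, and hence pull back stabilizers in $G$ to centralizers in $T$.

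Item (i) is handled by writing $t$ as a word $t_{z_1}\cdots t_{z_n}$ in the generators and iterating the defining relation $(t_y)^{t_z} = t_{y^{\tau_z}}$; an induction on $n$ gives $(t_y)^t = t_{y^{\tau(t)}}$. The same strategy yields \ref{lem:1eigen1:ii}: writing $\mu(t)$ as the corresponding product of the $\mu_{z_i}$'s and iterating Lemma~\ref{lem:moduletorep}\ref{lem:moduletorep:ii} moves the base point of the eigenspace along by $\tau(t)$.

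For \ref{lem:1eigen1:iii}, the hypothesis $(t_x)^t = (t_x)^{t'}$ is equivalent to $t'\,t^{-1} \in C_T(t_x)$; applying $\mu$ shows $\mu(t')\mu(t)^{-1} \in U_x$, i.e.\ $U_x\mu(t) = U_x\mu(t')$. Then \ref{lem:1eigen1:iv} follows because $m_x$ is manifestly fixed by right multiplication by any element of $U_x$, being a sum indexed by the full group $U_x$; combining this invariance with \ref{lem:1eigen1:iii} gives $(m_x)^{\mu(t')} = (m_x)^{\mu(t)}$, which is exactly what is needed to make definition~\eqref{eq:defmy} independent of the choice of $t$.

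For \ref{lem:1eigen1:v} I first argue that $m_x$ itself lies in $M_1^x$: for $u = \mu(c)$ with $c \in C_T(t_x)$, unique type forces $x^{\tau(c)} = x$ (from $\tau_{x^{\tau(c)}} = (\tau_x)^{\tau(c)} = \tau_x$), so by \ref{lem:1eigen1:ii} each summand $m^u$ stays in $M_1^x$. Applying \ref{lem:1eigen1:ii} once more, with $x^{\tau(t)} = y$ (again by unique type, since $(t_x)^t = t_y$), gives $m_y = (m_x)^{\mu(t)} \in M_1^y$. Finally \ref{lem:1eigen1:vi} is immediate: picking $s \in T$ with $(t_x)^s = t_y$ we have $(m_y)^{\mu(t)} = (m_x)^{\mu(st)}$, and $(t_x)^{st} = t_{y^{\tau(t)}}$ by (i), so this expression equals $m_{y^{\tau(t)}}$ thanks to the well-definedness established in \ref{lem:1eigen1:iv}. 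The main obstacle is really bookkeeping rather than a deep idea: the whole argument hinges on moving fluently between $T$, $U$ and $G$ via $\mu$ and $\tau$, and on repeatedly invoking unique type to translate equalities of Miyamoto involutions into equalities of axes. Once (i)--\ref{lem:1eigen1:iv} are in place, parts \ref{lem:1eigen1:v} and \ref{lem:1eigen1:vi} become almost automatic.
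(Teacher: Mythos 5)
Your proposal is correct and follows essentially the same route as the paper's proof: items (i) and \ref{lem:1eigen1:ii} by iterating the defining relations of $\U(A,\Omega)$ and Lemma~\ref{lem:moduletorep}\ref{lem:moduletorep:ii}, item \ref{lem:1eigen1:iii} via the coset identity $C_T(t_x)t = C_T(t_x)t'$, item \ref{lem:1eigen1:iv} by summing over that coset, and items \ref{lem:1eigen1:v}--\ref{lem:1eigen1:vi} by invoking unique type (Proposition~\ref{prop:matsuo-miy}\ref{it:matsuoofuniquetype}) to convert $t_{x^{\tau(t)}} = t_y$ into $x^{\tau(t)} = y$. No gaps; the only difference is that you spell out the inductions that the paper leaves implicit.
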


\begin{proof}
	\begin{enumerate}[(i)]
		\item This follows by definition of $\U(M_\alpha(\mG),\mP)$.
		\item By Lemma \ref{lem:moduletorep}\ref{lem:moduletorep:ii}, this follows because $\mu$ and $\tau$ are group homomorphisms.
		\item Since $C_{\U(M_\alpha(\mG),\mP)}(t_x)t = C_{\U(M_\alpha(\mG),\mP)}(t_x)t^\prime$ whenever $(t_x)^t=(t_x)^{t^\prime}$, also $U_x\mu(t) = U_x\mu(t^\prime)$.
		\item By \ref{lem:1eigen1:iii},
		\[
			(m_x)^{\mu(t)} = \sum_{\mu \in U_x\mu(t)} m^\mu = \sum_{\mu \in U_x\mu(t^\prime)} m^\mu = (m_x)^{\mu(t^\prime)}.
		\]
		\item Let $t \in C_{\U(M_\alpha(\mG),\mP)}(t_x)$. Since $t_x = (t_x)^t = t_{x^{\tau(t)}}$, it follows from Proposition \ref{prop:matsuo-miy}\ref{it:matsuoofuniquetype} that $x^{\tau(t)} = x$. Therefore $m^{\mu(t)} \in (M_1^x)^{\mu(t)} = M_1^{x^{\tau(t)}} = M_1^x$. Thus $m_x = \sum_{\mu \in U_x} m^\mu \in M_1^x$. For $y \in \mP$, let $t \in \U(M_\alpha(\mG),\mP)$ such that $(t_x)^t = t_{x^{\tau(t)}}= t_y$ and thus $x^{\tau(t)} = y$. Now, $m_y = m_x^{\mu(t)} \in (M_1^x)^{\mu(t)} = M_1^{x^{\tau(t)}} = M_1^y$.
		\item Because $(t_x)^{t^\prime} = t_y$, $x^{\tau(t^\prime)} = y$. Since $(t_x)^{t^\prime t} = t_{x^{\tau(t^\prime t)}} = t_{y^{\tau(t)}}$, it follows, by (\ref{eq:defmy}), that $m_{y^{\tau(t)}} = (m_x)^{\mu(t^\prime t)} = (m_y)^{\mu(t)}$. \qedhere
	\end{enumerate}
\end{proof}

\begin{thm} \label{thm:1eigen1}
	Consider the setting of Definition \ref{def:1eigenspace}. The subspace $\langle m_y \mid y \in \mP \rangle$ is a submodule of $M$ and the map, defined by
	\[
		M_\alpha(\mG) \to \langle m_y \mid y \in \mP \rangle : y \mapsto m_y,
	\]
	is a surjective homomorphism of $M_\alpha(\mG)$\dash modules.
    In particular, $\langle m_y \mid y \in \mP \rangle$ is a quotient of the regular module for $M_\alpha(\mG)$.
\end{thm}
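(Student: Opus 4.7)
My plan is to define the linear map $\psi\colon M_\alpha(\mG) \to M$ by extending $\psi(y) := m_y$ for $y \in \mP$. Then $\langle m_y \mid y \in \mP \rangle$ is by construction the image of $\psi$, so surjectivity onto it is tautological, and once $\psi$ is shown to be a homomorphism of $M_\alpha(\mG)$-modules, its image is automatically an $M_\alpha(\mG)$-submodule of $M$. Everything therefore reduces to verifying $\psi(y\cdot z) = m_y\cdot z$ for all $y, z \in \mP$, which I would split into three cases depending on the position of $y$ and $z$ in the Fischer space.

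If $y = z$, then $yz = y$ and $m_y \cdot y = m_y$ by Lemma~\ref{lem:1eigen1}\ref{lem:1eigen1:v}. If $y \ne z$ and $y \nsim z$, then $yz = 0$ in $M_\alpha(\mG)$ while $z$ lies in the $0$-eigenspace $A_0^y$ by Proposition~\ref{prop:matsuo-eigenspaces}; the Jordan fusion-rule entry $1 \star 0 = \emptyset$ then forces $m_y \cdot z \in M_1^y \cdot A_0^y = \{0\}$. For the substantive case $y \sim z$, the target is $m_y \cdot z = \tfrac{\alpha}{2}(m_y + m_z - m_{y \wedge z})$. My plan is to first invoke connectedness, Lemma~\ref{lem:trans}, and the $T$-equivariance $(m_y)^{\mu(t)} = m_{y^{\tau(t)}}$ of Lemma~\ref{lem:1eigen1}\ref{lem:1eigen1:vi} together with $(m \cdot a)^{\mu(t)} = m^{\mu(t)} \cdot a^{\tau(t)}$ to transport the identity so that $y = x$. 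Then, decomposing $w = \tfrac{\alpha}{2}x + \tfrac{1}{2}(w + x \wedge w - \alpha x) + \tfrac{1}{2}(w - x \wedge w)$ via Proposition~\ref{prop:matsuo-eigenspaces} and noting that the $A_0^x$-summand is annihilated by $m_x \in M_1^x$, one obtains
\[
m_x \cdot w \;=\; \tfrac{\alpha}{2}\, m_x + \tfrac{1}{2}\, m_x \cdot (w - x \wedge w),
\]
with the second term in $M_\alpha^x$. The identity therefore reduces to
\[
m_x \cdot (w - x \wedge w) \;=\; \alpha\bigl(m_w - m_{x \wedge w}\bigr).
\]

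The main obstacle is this last equation. I would use Lemma~\ref{lem:1eigen1}\ref{lem:1eigen1:iv} with the choices $t = t_{x \wedge w}$ and $t = t_w$ to rewrite $m_w = m_x^{\mu_{x \wedge w}}$ and $m_{x \wedge w} = m_x^{\mu_w}$, reducing the target to $m_x \cdot (w - x \wedge w) = \alpha(m_x^{\mu_{x \wedge w}} - m_x^{\mu_w})$ inside $M_\alpha^x$. Both sides indeed lie in $M_\alpha^x$ thanks to the conjugation relations $(\mu_{x \wedge w})^{\mu_x} = \mu_w$ and $(\mu_w)^{\mu_x} = \mu_{x \wedge w}$ from Proposition~\ref{prop:tauconj}. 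The verification itself will come from applying $\mu_{x \wedge w}$ to both sides, exploiting the module equivariance together with $w^{\tau_{x \wedge w}} = x$ and the conjugation relations among $\mu_x$, $\mu_w$, $\mu_{x \wedge w}$ attached to the line $\{x, w, x \wedge w\}$; the $U_x$-invariance of $m_x$, supplied precisely by the orbit averaging in Definition~\ref{def:1eigenspace}, is what guarantees that the different conjugation paths along this line yield a consistent answer, and is therefore essential. An arbitrary $1$-eigenvector of $x$ need not satisfy the identity, so this is the delicate point of the whole argument.
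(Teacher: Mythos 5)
Your setup and the easy cases are sound and agree with the paper's proof: surjectivity is indeed tautological, the image is a submodule once the map is a homomorphism, $m_y \cdot y = m_y$ follows from $m_y \in M_1^y$ (Lemma~\ref{lem:1eigen1}\ref{lem:1eigen1:v}), the non-collinear case follows from $1 \star 0 = \emptyset$, and your reduction of the collinear case to the single identity $m_x \cdot (w - x \wedge w) = \alpha(m_w - m_{x \wedge w})$ is correct (it is equivalent, modulo the annihilation of the $0$\dash eigenvector $w + x\wedge w - \alpha x$ by $m_x$, to the paper's relation (3)). The rewritings $m_w = m_x^{\mu_{x\wedge w}}$ and $m_{x\wedge w} = m_x^{\mu_w}$ are also correct.

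The gap is in your final step, which is circular. Applying $\mu_{x \wedge w}$ to both sides of the target identity and using the equivariance $(m \cdot a)^{\mu_{x\wedge w}} = m^{\mu_{x\wedge w}} \cdot a^{\tau_{x\wedge w}}$ together with $w^{\tau_{x\wedge w}} = x$ produces $m_w \cdot (x - x\wedge w) = \alpha(m_x - m_{x\wedge w})$ --- which is the very same identity with the roles of $x$ and $w$ interchanged. The symmetry of the line $\{x, w, x\wedge w\}$ only permutes the three instances of the claim among themselves, so no amount of conjugation can verify any one of them; and the $U_x$\dash averaging in Definition~\ref{def:1eigenspace} only buys the well-definedness of the $m_y$ (that is Lemma~\ref{lem:1eigen1}\ref{lem:1eigen1:iv}, already in hand), not the product formula. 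The missing ingredient is to invoke the module axioms at the \emph{other two} axes of the line as well: for each point $p$ of the line with remaining points $q,r$ one has the annihilation relation $m_p \cdot (q + r - \alpha p) = 0$ (since $q + r - \alpha p \in A_0^p$ and $m_p \in M_1^p$) and the relation $(m_q - m_r)\cdot p = \alpha(m_q - m_r)$ (since $m_q - m_r$ is negated by $\mu_p$ and hence lies in $M_\alpha^p$). The paper's proof establishes relation (3) precisely by taking a specific linear combination of five of these six relations, namely the displayed relations (5)--(9) combined as $(5)-(6)+(7)+(8)-(9)$; your target identity then follows. As written, your argument never uses the fusion rule at $w$ or at $x \wedge w$, so it cannot close.
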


\begin{proof}
	It suffices to prove that for idempotents $y,z \in \mP$
	\begin{alignat}{2}
		&m_z \cdot z = m_z; \label{1} \\
		&m_y \cdot z = 0 &&\text{ if $y \nsim z$;} \label{2} \\
		&(m_y + m_{y \wedge z} - \alpha m_z) \cdot z = 0 &&\text{ if $y \sim z$;} \label{3} \\
		&(m_y - m_{y \wedge z}) \cdot z = \alpha(m_y - m_{y \wedge z}) &&\text{ if $y \sim z$.} \label{4}
	\end{alignat}
	Statement (\ref{1}) follows from Lemma \ref{lem:1eigen1}\ref{lem:1eigen1:v}. For (\ref{2}), let $z,y \in \mP$ and $y \nsim z$. This implies that $z \in (M_\alpha(\mG))_0^y$. By Lemma \ref{lem:1eigen1}\ref{lem:1eigen1:v}, $m_y \in M_1^y$. From the fusion rule $M_1^y \cdot (M_\alpha(\mG))_0^y \subseteq \{0\}$ we infer $m_y \cdot z = 0$. For (\ref{3}) and (\ref{4}), let $z,y \in \mP$ and $y \sim z$. Since $(m_y - m_{y \wedge z})^{\mu_z}=m_{y \wedge z} - m_y$ by Lemma \ref{lem:1eigen1}\ref{lem:1eigen1:vi}, (\ref{4}) follows. For (\ref{3}), note that \[m_z \cdot (y + z \wedge y - \alpha z) = 0\] since $m_z \in M_1^z$, $y + z \wedge y - \alpha z \in (M_\alpha(\mG))_0^z$ and $M_1^z \cdot (M_\alpha(\mG))_0^z \subseteq \{0\}$. Interchanging the roles of $y$, $z$ and $y\wedge z$ in this relation and relation (\ref{4}) gives us the following 5 relations:
	\begin{align}
		&m_y \cdot (z + y\wedge z - \alpha y) = 0, \label{5}\\
		&m_z \cdot (y \wedge z + y - \alpha z) = 0, \label{6}\\
		&m_{y \wedge z} \cdot (y + z - \alpha y\wedge z) = 0, \label{7}\\
		&(m_z - m_{y \wedge z}) \cdot y - \alpha(m_z - m_{y \wedge z} = 0, \label{8}\\
		&(m_y - m_z) \cdot y \wedge z - \alpha(m_y - m_z) = 0. \label{9}
	\end{align}
	It is easy to verify, using relation (\ref{1}), that $(\ref{5}) - (\ref{6}) + (\ref{7}) + (\ref{8}) - (\ref{9})$ gives us exactly (\ref{3}). \qedhere
\end{proof}

\begin{rem}
	We do not know whether it is always possible to choose $m \in M_1^x$ such that $m_x \neq 0$.
\end{rem}

Theorem~\ref{thm:1eigen2} will tell us when the map from Theorem \ref{thm:1eigen1} is an isomorphism. The proof makes use of a Frobenius form for Matsuo algebras defined in Lemma \ref{lem:1eigen2} below. The only obstructions are the facts that $m_x$ might be zero or that the Frobenius form might be degenerate.

\begin{lem}[{\cite[Corollary 7.4]{HRS15}}] \label{lem:1eigen2}
	Let $\mG$ be a Fischer space. The Matsuo algebra $M_\alpha(\mG)$ is a Frobenius axial algebra. When $\mG$ is connected, its Frobenius form is, up to scalar, uniquely determined. It is given by
	\[
		\langle x,y \rangle = \left\{ \begin{array}{ll}
 			1 & \text{if $x=y$,} \\
 			\frac{\alpha}{2} & \text{if $x \sim y$,} \\
 			0 & \text{if $x \nsim y$,}
 		\end{array} \right.
	\]
	for all $x,y \in \mP$.
\end{lem}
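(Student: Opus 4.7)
The plan is to prove the two assertions separately: existence (the given bilinear form makes $M_\alpha(\mG)$ a Frobenius axial algebra) and uniqueness up to scalar in the connected case.

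For existence, by bilinearity it suffices to verify the identity $\langle xy, z \rangle = \langle y, xz \rangle$ for basis elements $x, y, z \in \mP$. The case $x = y$ is immediate from $x \cdot x = x$. If $x \nsim y$ with $x \neq y$, then $xy = 0$ and the identity becomes $\langle y, xz \rangle = 0$; after expanding $xz$ using the multiplication rule this reduces (in the only non-trivial subcase $x \sim z$) to $\langle y, z \rangle = \langle y, x \wedge z \rangle$. This follows because $\tau_x$ is an automorphism of the Fischer space $\mG$ that fixes $y$ and swaps $z$ with $x \wedge z$, so $y$ is collinear with $z$ iff it is collinear with $x \wedge z$. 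When $x \sim y$ with $x \neq y$, I would split further on the relation of $x$ to $z$; after cancellation the one non-immediate subcase, namely $x \sim z$ with $x \neq z$ and $y$ off the line through $x$ and $z$, reduces to $\langle y, x \wedge z \rangle = \langle x \wedge y, z \rangle$. Again this follows from $\tau_x$ being an automorphism of $\mG$: $\tau_x$ sends the pair $(y, x \wedge z)$ to $(x \wedge y, z)$, preserving collinearity and distinctness.

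For uniqueness, let $\langle \cdot, \cdot \rangle'$ be an arbitrary Frobenius form on $M_\alpha(\mG)$. The key observation is $\langle xy, x \rangle' = \langle y, x \cdot x \rangle' = \langle y, x \rangle'$ for all $x, y \in \mP$. If $x \neq y$ and $x \nsim y$, the left side is $0$, so $\langle x, y \rangle' = 0$. If $x \sim y$, expanding $xy = \frac{\alpha}{2}(x + y - x \wedge y)$ produces a linear relation among $\langle x, x \rangle'$, $\langle y, x \rangle'$ and $\langle x \wedge y, x \rangle'$; combining this with the analogous relation obtained from $x \cdot (x \wedge y) = \frac{\alpha}{2}(x + (x \wedge y) - y)$, and using $\alpha \neq 1$ to eliminate, I get $\langle y, x \rangle' = \langle x \wedge y, x \rangle' = \frac{\alpha}{2} \langle x, x \rangle'$. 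Exchanging the roles of $x$ and $y$ in this derivation yields $\langle x, y \rangle' = \frac{\alpha}{2} \langle y, y \rangle'$, so $\langle x, x \rangle' = \langle y, y \rangle'$ whenever $x \sim y$. Connectedness of $\mG$ then propagates this equality to all of $\mP$, and a single normalization recovers the claimed formula.

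The main obstacle is the combinatorial case analysis in the existence step, where one must track the possible configurations of $x, y, z$ in a Fischer space. In each case the Fischer space axiom enters via the fact that each Miyamoto involution $\tau_x$ is an automorphism of $\mG$; this allows us to reduce every required identity to a symmetry statement about collinearity, bypassing an explicit appeal to the dual affine plane of order $2$ or the affine plane of order $3$ as the two generating configurations. By contrast, the uniqueness argument is a short algebraic computation in which the only geometric input is the connectedness of $\mG$.
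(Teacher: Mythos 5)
The paper does not actually prove this lemma; it is quoted directly from \cite[Corollary 7.4]{HRS15}, so there is no internal proof to compare against. Your argument is a correct, self-contained proof. Its organizing idea -- that the proposed form depends only on whether two points are equal, collinear or non-collinear, hence is invariant under each $\tau_x \in \aut(\mG)$, which disposes of every non-trivial case of $\langle xy,z\rangle=\langle y,xz\rangle$ (reducing them to $\langle y,z\rangle=\langle y^{\tau_x},z^{\tau_x}\rangle$ type identities) -- is exactly where the Fischer-space hypothesis enters, via the fact recorded in Definition~\ref{def:3t-Fs}\ref{it:tau-Fs} that $\tau_x$ is an automorphism of $\mG$. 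The case analysis you sketch is complete once one checks the degenerate subcases ($z=x$, $z=y$, $z=x\wedge y$), all of which are short direct computations or are subsumed by the same invariance argument. The uniqueness computation is also right: the two relations coming from $xy$ and from $x(x\wedge y)$ give $(\alpha-1)\bigl(\langle y,x\rangle'-\langle x\wedge y,x\rangle'\bigr)=0$ and then $\langle y,x\rangle'=\tfrac{\alpha}{2}\langle x,x\rangle'$.

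One small point to tighten: in the last step of uniqueness you pass from $\langle y,x\rangle'=\tfrac{\alpha}{2}\langle x,x\rangle'$ and $\langle x,y\rangle'=\tfrac{\alpha}{2}\langle y,y\rangle'$ to $\langle x,x\rangle'=\langle y,y\rangle'$, which tacitly uses symmetry of $\langle\cdot,\cdot\rangle'$; the paper's definition of a Frobenius form does not require symmetry. This is easily repaired without that assumption: applying the Frobenius identity in the other slot, $\langle x,y\rangle'=\langle x\cdot x,y\rangle'=\langle x,xy\rangle'$, and running the same elimination gives $\langle x,y\rangle'=\tfrac{\alpha}{2}\langle x,x\rangle'$ as well, whence $\langle x,x\rangle'=\langle y,y\rangle'$ for $x\sim y$ since $\alpha\neq 0$, and connectedness finishes the argument as you say (and shows a posteriori that the form is symmetric).
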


\begin{thm} \label{thm:1eigen2}
	Let $\mG=(\mP,\mL)$ be a connected Fischer space without isolated points. Let $M$ be an $M_\alpha(\mG)$\dash module, $x \in \mP$, $m \in M_1^x \setminus \{ 0 \}$ and define
	\[
		m_x = \sum_{\mu \in U_x} m^\mu.
	\]
	Let $U = \langle \mu_x \mid x \in \mP \rangle \leq \GL(M)$. If $m_x \neq 0$ and the Frobenius form of $M_\alpha(\mG)$ is non-degenerate,
    then $\langle (m_x)^\mu \mid \mu \in U\rangle$ is a regular submodule of $M_\alpha(\mG)$, i.e., it is isomorphic to $M_\alpha(\mG)$ as $M_\alpha(\mG)$\dash module.
\end{thm}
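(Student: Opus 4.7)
The plan is to invoke Theorem~\ref{thm:1eigen1}, which supplies a surjective $M_\alpha(\mG)$\dash module homomorphism $\pi \colon M_\alpha(\mG) \to \langle m_y \mid y \in \mP\rangle$ sending $y \mapsto m_y$, and then to prove that $\pi$ is injective. Note that the image coincides with $\langle (m_x)^\mu \mid \mu \in U\rangle$: by Lemma~\ref{lem:1eigen1}\ref{lem:1eigen1:vi}, together with the transitivity furnished by Lemma~\ref{lem:trans} (which applies since $\mG$ is connected), the two generating sets span the same subspace. Once $\pi$ is shown to be injective, it is an isomorphism of $M_\alpha(\mG)$\dash modules, which is exactly the desired conclusion.

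Let $a = \sum_{y \in \mP} c_y y \in \ker \pi$, so that $\sum_y c_y m_y = 0$ in $M$. The key step is to compute, for an arbitrary axis $x \in \mP$, the projection of each $m_y$ onto $M_1^x$. By Lemma~\ref{lem:1eigen1}\ref{lem:1eigen1:v}, $m_x \in M_1^x$. For $y \neq x$ with $y \nsim x$, relation~(\ref{2}) in the proof of Theorem~\ref{thm:1eigen1} gives $m_y \cdot x = 0$, so $m_y \in M_0^x$ and its $M_1^x$\dash component vanishes. For $y \sim x$, relations~(\ref{3}) and~(\ref{4}) of that same proof read $m_y + m_{y \wedge x} - \alpha m_x \in M_0^x$ and $m_y - m_{y \wedge x} \in M_\alpha^x$; adding these places $2 m_y - \alpha m_x$ in $M_0^x \oplus M_\alpha^x$, so the $M_1^x$\dash component of $m_y$ equals $(\alpha/2)\, m_x$. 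Summing the contributions gives
\[
    \operatorname{proj}_{M_1^x}\!\bigl(\pi(a)\bigr) \;=\; \Bigl( c_x + \tfrac{\alpha}{2}\!\!\sum_{y \sim x}\! c_y \Bigr)\, m_x \;=\; \langle a, x\rangle \, m_x,
\]
where $\langle \cdot, \cdot\rangle$ is the Frobenius form described in Lemma~\ref{lem:1eigen2}.

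From $\pi(a) = 0$ we then get $\langle a, x\rangle \, m_x = 0$; since $m_x \neq 0$ by hypothesis, $\langle a, x\rangle = 0$. The same computation is valid with $x$ replaced by any $y \in \mP$: by Lemma~\ref{lem:trans} and the definition of $m_y$, each $m_y$ is of the form $(m_x)^{\mu(t)}$ for some $t \in T$, and since $\mu(t) \in \GL(M)$ is an isomorphism, $m_y \neq 0$ as well. Therefore $\langle a, y\rangle = 0$ for every $y \in \mP$, and since $\mP$ spans $M_\alpha(\mG)$ and the Frobenius form is non-degenerate by assumption, $a = 0$. Hence $\ker \pi = 0$ and $\pi$ is the desired isomorphism identifying $\langle (m_x)^\mu \mid \mu \in U\rangle$ with the regular module $M_\alpha(\mG)$.

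The anticipated obstacle is the eigenspace bookkeeping in the middle paragraph: one must realise that the very relations used to establish that $\pi$ is a module homomorphism in Theorem~\ref{thm:1eigen1} already pin down the decomposition of every $m_y$ with respect to every axis $x$, and that the resulting $M_1^x$\dash coefficient is exactly the Frobenius pairing $\langle a, x\rangle$. Once that identification is in hand, the non-degeneracy of the (essentially unique) Frobenius form of a connected Matsuo algebra delivers the injectivity of $\pi$ at no further cost.
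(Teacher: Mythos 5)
Your proof is correct, and its overall architecture matches the paper's: both arguments reduce the injectivity of the map from Theorem~\ref{thm:1eigen1} to the claim that $\langle a,y\rangle=0$ for every $y\in\mP$ whenever $a$ is in the kernel, using $m_y\neq 0$ (which follows from transitivity and $m_x\neq 0$) at the crucial moment, and then conclude by non-degeneracy of the Frobenius form of Lemma~\ref{lem:1eigen2}. The difference lies in how that reduction is organised. You work inside the module: relations~(\ref{2})--(\ref{4}) pin down the $M_1^x$\dash component of each $m_y$ as $\langle y,x\rangle\,m_x$ (implicitly using that $v\cdot x=0$ forces $v\in M_0^x$, that $v\cdot x=\alpha v$ forces $v\in M_\alpha^x$, and that $\Char k\neq 2$), so the $M_1^x$\dash projection of $\pi(a)$ is exactly $\langle a,x\rangle\,m_x$. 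The paper instead works on the algebra side: it decomposes $a=a_1+a_0+a_\alpha$ with respect to an axis $y$, uses the equivariance of $a\mapsto m_a$ under multiplication by $y$ and under $\mu_y$/$\tau_y$ to obtain three linear equations forcing $m_{a_1}=0$, and then $a_1=\lambda y$ together with $m_y\neq 0$ gives $\lambda=\langle a,y\rangle=0$. The two computations are the same bookkeeping viewed from opposite ends; yours is arguably slightly more direct, since it reads the answer off relations already established in the proof of Theorem~\ref{thm:1eigen1}. Your explicit identification of the image $\langle m_y\mid y\in\mP\rangle$ with $\langle (m_x)^\mu\mid\mu\in U\rangle$ via Lemma~\ref{lem:1eigen1}\ref{lem:1eigen1:vi} is also correct and is a point the paper leaves implicit.
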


\begin{proof}
	It suffices to prove that the map from Theorem \ref{thm:1eigen1} is injective. Let $m_a$ be the image of $a \in M_\alpha(\mG)$ under this map. We shall prove that for the stated conditions, $m_a\neq 0$ when $a \neq 0$. Since we require $\mG$ to be connected, for every $y \in \mP$, there exists a $\mu \in U$ such that $m_y = (m_x)^\mu$. Because $m_x \neq 0$ and $\mu \in \GL(M)$, $m_y \neq 0$ for all $y \in \mP$.

	Suppose that $a \in M_\alpha(\mG)$ and $m_a = 0$. Let $y \in \mP$ be an arbitrary axis and write $a=a_1 + a_0 + a_\alpha$ where $a_\phi \in (M_\alpha(\mG))^y_\phi$. Then
	\[
		m_a = m_{a_1} + m_{a_0} + m_{a_\alpha} = 0.
	\]
	Since $0 = m_a \cdot y = m_{a \cdot y}$,
	\[
		m_{a_1} + \alpha m_{a_\alpha} = 0.
	\]
	We also have $0 = m_a^{\mu_y} = m_{a^{\tau_y}}$ and therefore
	\[
		m_{a_1} + m_{a_0} - m_{a_\alpha} = 0.
	\]
	From these three equations it follows that $m_{a_1} = m_{a_0} = m_{a_\alpha} = 0$.

	Since the 1\dash eigenspace of $y$ in $M_\alpha(\mG)$ is spanned by $y$, $a_1 = \lambda y$ for some $\lambda \in k$. Because $m_y \neq 0$, we conclude that $a_1$ must be zero. As our choice of $y \in \mP$ was arbitrary, we infer that for every axis in $\mP$ the component of $a$ in its 1\dash eigenspace is zero.

	Let $\langle \; , \; \rangle$ be the Frobenius form for $M_\alpha(\mG)$ as given by Lemma \ref{lem:1eigen2}. Since eigenvectors corresponding to different eigenvalues are perpendicular to each other, our previous conclusion is equivalent to
	\[
		\langle a , y \rangle = 0 \text{ for all $y \in \mP$}.
	\]
	The elements of $\mP$ span $M_\alpha(\mG)$ and therefore $\langle a , n \rangle = 0$ for all $n \in M_\alpha(\mG)$. Because the bilinear form $\langle \; , \; \rangle$ is non-degenerate, $a=0$.
\end{proof}

\begin{rem}
	\begin{enumerate}[(i)]
		\item Let $\mG$ be a connected Fischer space without isolated points and let $M$ be a finite dimensional Frobenius $M_\alpha(\mG)$\dash module. Suppose that the Frobenius form of $M_\alpha(\mG)$ is non-degenerate. Combining Theorem \ref{thm:1eigen2}, Lemma \ref{lem:1eigen2} and Proposition \ref{prop:Maschke}, we can decompose $M$ as a direct sum of regular modules and a module $M^\prime$ for which $\sum_{\mu \in U_x} m^\mu=0$ for every $m \in (M^\prime)_1^x$ and every $x \in \mP$.
		\item Suppose $\mG$ is a finite connected Fischer space without isolated points. Let $A$ be its collinearity matrix. The condition that the Frobenius form from Lemma \ref{lem:1eigen2} is non-degenerate can be expressed as $\det\left(I + \frac{\alpha}{2}A \right) \neq 0$. From this, it is clear that this can only fail for a finite number of choices for $\alpha$.
		\item Since the Miyamoto group of $M_\alpha(\mG)$ acts transitively on the points of a connected Fischer space $\mG$, the number of lines through a point is a constant $d \in \mathbb{N}$ if $\mG$ is finite. The number of points collinear with any given point is then $2d$. The vector of all ones is therefore a $(1+ \alpha d)$\dash eigenvector of $I + \frac{\alpha}{2}A$ where $A$ is the collinearity matrix of $\mG$. It is easy to verify that
	\[
		 \left( \sum_{x \in \mP} x \right) a = (1 + \alpha d) a
	\]
	for all $a \in M_\alpha(\mG)$. If $1 +\alpha d = 0$ and hence $\det\left(I + \frac{\alpha}{2}A\right) =0$, then $\langle \sum_{x \in \mP} x \rangle$ is a 1\dash dimensional $M_\alpha(\mG)$\dash submodule of $M_\alpha(\mG)$. The quotient module is then an $M_\alpha(\mG)$\dash module non-isomorphic to $M_\alpha(\mG)$ with a non-trivial 1\dash eigenspace for every axis $x \in \mP$. The condition that the Frobenius form must be non-degenerate can therefore not be omitted. Note that if $1 +\alpha d \neq 0$, then $M_\alpha(\mG)$ is a unital algebra with unit $(1 + \alpha d)^{-1} \left( \sum_{x \in \mP} x \right)$.
	\end{enumerate}
\end{rem}

\small

\bibliographystyle{alpha}
\bibliography{Articles,Books}

\end{document}